\newcommand {\bmo}{\mathrm{bmo}}
\newcommand {\BMO}{{\mathrm{BMO}}}
\newcommand {\C}{{\mathbb C}}
\newcommand {\ud}{\mathrm{d}}
\newcommand {\veps}{\varepsilon}
\newcommand {\HT}{\mathcal{H}}
\newcommand {\ka}{\kappa}
\newcommand {\la}{\lambda}
\newcommand {\rb}{\rangle}
\newcommand {\lb}{{\langle}}
\newcommand {\loc}{{\mathrm{loc}}}
\newcommand {\N}{{{\mathbb N}}}
\newcommand {\ph}{\varphi}
\newcommand {\R}{\mathbb R}
\newcommand {\Rn}{\mathbb{R}^{n}}
\newcommand {\Ric}{\mathrm{Ric}}
\newcommand {\supp}{\mathrm{supp}}
\newcommand {\Sw}{\mathcal{S}}
\newcommand {\Tp}{T^{*}\Rn}
\newcommand\WF{\operatorname{WF}}
\newcommand {\Z}{\mathbb Z}
\newcommand {\vanish}[1]{\relax}
\newcommand{\wh}{\widehat}
\DeclareFontFamily{U}{mathx}{\hyphenchar\font45}
\DeclareFontShape{U}{mathx}{m}{n}{
      <5> <6> <7> <8> <9> <10>
      <10.95> <12> <14.4> <17.28> <20.74> <24.88>
      mathx10
      }{}
\DeclareSymbolFont{mathx}{U}{mathx}{m}{n}
\DeclareMathAccent{\widecheck}{0}{mathx}{"71}
\DeclareMathOperator{\Real}{Re}
\DeclareMathOperator{\Imag}{Im}
\newtheorem{theorem}{Theorem}[section]
\newtheorem{lemma}[theorem]{Lemma}
\newtheorem{proposition}[theorem]{Proposition}
\newtheorem{corollary}[theorem]{Corollary}
\theoremstyle{definition}
\newtheorem{definition}[theorem]{Definition}
\newtheorem{remark}[theorem]{Remark}
\numberwithin{equation}{section}
\protected\def\ignorethis#1\endignorethis{}
\let\endignorethis\relax
\title[Propagation of singularities for $C^{r}$ coefficients with $r>1$]{Propagation of singularities for equations with $C^{r}$ coefficients for $r>1$}
\author{Jan Rozendaal}
\address{Institute of Mathematics, Polish Academy of Sciences\\
ul.~\'{S}niadeckich 8\\
00-656 Warsaw\\
Poland}
\email{jrozendaal@impan.pl}
\keywords{Propagation of singularities, rough coefficients, pseudodifferential operators, paradifferential calculus, wavefront set}
\subjclass[2020]{Primary 58J47. Secondary 35A18, 35S50, 58J45}
\thanks{This research was funded in part by the National Science Center, Poland, grant 2021/43/D/ST1/00667. The author is partially supported by NCN grant UMO-2023/49/B/ST1/01961.}
\begin{document}

\begin{abstract}
We observe that, for $r>1$, $s$ in an $r$-dependent interval, $p$ a homogeneous pseudodifferential symbol of order $m$ having $C^{r}$ regularity in space, and $u\in H^{s+m-r}(\Rn)$ such that $p(x,D)u\in H^{s}(\Rn)$, each point in the $H^{s+m-1}$ wavefront set of $u$ lies on a maximally extended null bicharacteristic of $p$ which is contained in the $H^{s+m-1}$ wavefront set of $u$. In fact, for $r=2$ slightly less than $C^{1,1}$ regularity suffices, and here the results apply to manifolds with bounded Ricci curvature. 
\end{abstract}
	
\maketitle

\section{Introduction}\label{sec:intro}

In this article we consider a version for linear equations with rough coefficients of the classical statement on propagation of singularities.

\subsection{Main results}

Our results are formulated using pseudodifferential symbols that have limited regularity in the spatial variable. 

Loosely speaking, for $r>0$ and $m\in\R$, the class $C^{r}_{*}S^{m}_{1,0}$ from Definition \ref{def:symbolrough} consists of symbols 
that behave like an element of $S^{m}_{1,0}$ in the fiber variable, but are elements of the Zygmund space $C^{r}_{*}(\Rn)$ from Definition \ref{def:Zygmund} in the spatial variable. For $r\notin \N$ the latter space coincides with $C^{r}(\Rn)$, whereas for $r\in\N$ it is a strict subset of $C^{r-1,1}(\Rn)\subseteq C^{r}(\Rn)$. 
For such a symbol $p$ and for 
$(x,\xi)\in T^{*}\Rn\setminus o=\R^{2n}\setminus (\Rn\times\{0\})$, we write 
\begin{equation}\label{eq:phat}
\hat{p}(x,\xi):=\lim_{\la\to\infty}\la^{-m}p(x,\la\xi).
\end{equation}
Moreover, $\Sigma_{\hat{p}}=\hat{p}^{-1}(\{0\})$ is the characteristic set of $\hat{p}$, cf.~\eqref{eq:Sigma}. 

We refer to Section \ref{subsec:microlocal} for other basic notions from microlocal analysis which appear in the following theorem, the main result of this article.

\begin{theorem}\label{thm:mainintro}
Let $r>1$, $m\in\R$ and $-r<s<r$. Let $p\in C^{r}_{*}S^{m}_{1,0}$ be such that $\Imag p\in C^{r}_{*}S^{m-1}_{1,0}$, and such that $\hat{p}$ is a well-defined element of $C^{1}(\Tp\setminus o)$. Set
\begin{equation}\label{eq:sigma}
\sigma:=\begin{cases}
s-r&\text{if }0<s<r,\\
\veps-r&\text{if }-r<s\leq 0,
\end{cases}
\end{equation}
for $\veps>0$. Let $u\in H^{\sigma+m}(\Rn)$ be such that $p(x,D)u\in H^{s}(\Rn)$. Then $\WF^{s+m}u\subseteq\Sigma_{\hat{p}}$. Moreover, for each $(x,\xi)\in\WF^{s+m-1}u$ there exists a null bicharacteristic $\gamma:\R\to\Tp\setminus o$ of $\hat{p}$ satisfying $\gamma(0)=(x,\xi)$ and $\gamma(t)\in \WF^{s+m-1}u$ for all $t\in\R$. 
\end{theorem}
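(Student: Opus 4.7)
The plan is to follow the classical Hörmander positive commutator argument, reformulated in paradifferential calculus to handle the rough coefficients. First, one replaces $p(x,D)$ by the paradifferential operator $\Para(p)$, which has the same principal symbol $\hat{p}$ but a well-controlled symbolic calculus for Sobolev indices in $(-r, r)$. The difference $p(x,D) - \Para(p)$ has order $m - r$, so under the choice of $\sigma$ in \eqref{eq:sigma} it maps $H^{\sigma+m}(\Rn)$ continuously into $H^{s}(\Rn)$; the hypothesis $p(x,D) u \in H^{s}$ thus translates into $\Para(p) u \in H^{s}$ modulo a harmless term.

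For the elliptic inclusion $\WF^{s+m} u \subseteq \Sigma_{\hat{p}}$, fix $(x_{0}, \xi_{0})$ outside $\Sigma_{\hat{p}}$. Since $\hat{p}$ is bounded below on a conic neighborhood there, $\Para(\hat{p})$ admits a microlocal parametrix $Q$ of order $-m$ with $Q \Para(\hat{p}) = I + R$, where $R$ is smoothing of order $r$. Since the paradifferential composition rule applies on $H^{t}$ for $|t| < r$, applying $Q$ to $\Para(p) u \in H^{s}$ yields $u \in H^{s+m}$ microlocally near $(x_{0}, \xi_{0})$.

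For the propagation statement, let $(x_{0}, \xi_{0}) \in \Sigma_{\hat{p}} \setminus \WF^{s+m-1} u$; note that $\hat{p}$ has real principal part, because $\Imag p \in C^{r}_{*} S^{m-1}_{1,0}$ is subprincipal, so the Hamilton vector field $H_{\hat{p}}$ is continuous on $\Tp\setminus o$ and admits integral curves locally. For a short bicharacteristic arc $\gamma$ emanating from $(x_{0},\xi_{0})$ I would construct a homogeneous symbol $q \in S^{0}_{1,0}$ supported in a narrow conic tube around $\gamma$ with
\begin{equation*}
\{\hat{p}, q^{2}\} = -e^{2} + f,
\end{equation*}
where $e \gtrsim q$ near $\gamma$ and $\supp f$ is disjoint from a neighborhood of $(x_{0}, \xi_{0})$. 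Combining the paradifferential commutator identity
\begin{equation*}
i[\Para(\hat{p}), \Para(q^{2})] = \Para(\{\hat{p}, q^{2}\}) + R_{1},
\end{equation*}
in which $R_{1}$ has order $m-r$, with a sharp G\aa{}rding estimate for $\Para(e^{2})$, produces an a priori bound of the form
\begin{equation*}
\norm{\Para(e) u}_{H^{s+m-1}}^{2} \lesssim \norm{\Para(q^{2}) \Para(\hat{p}) u}_{H^{s+m-1}} \norm{u}_{H^{\sigma+m}} + \textrm{(already-controlled terms)}.
\end{equation*}
The right-hand side is finite by hypothesis, so the piece of $\gamma$ where $e>0$ lies outside $\WF^{s+m-1} u$. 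A standard continuation/Zorn argument then extends $\gamma$ to a maximal bicharacteristic contained in the complement of $\WF^{s+m-1} u$.

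The principal obstacle is ensuring that the paradifferential remainders, in particular $R_{1}$, are strictly of lower order than the main commutator term in the Sobolev scale used. The commutator identity above has its remainder bounded on $H^{t}$ only for $t$ in an $r$-dependent interval, and keeping $s+m-1$ admissible forces the range $|s|<r$ in the statement; this is also what governs the order of Sobolev regularity one can hope to obtain. A secondary subtlety is that, because $\hat{p}$ is only $C^{1}$, integral curves of $H_{\hat{p}}$ need not be unique; the maximally extended null bicharacteristic is then obtained by Zorn's lemma on the family of integral curves through $(x_{0}, \xi_{0})$ lying in $\WF^{s+m-1} u$, partially ordered by extension.
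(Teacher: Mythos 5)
Your decomposition $p(x,D) = \Para(p) + (p(x,D)-\Para(p))$ is exactly the paper's $p = p^{\sharp} + p^{\flat}$ splitting with $\delta = 1$, and the reduction of the hypothesis $p(x,D)u \in H^{s}$ to $\Para(p)u\in H^{s}$ via the mapping property of the rough remainder is precisely how the paper proceeds. The elliptic-parametrix step is also aligned with the paper's use of \cite[Theorem 9.6.7]{Hormander97}. Where you genuinely diverge is the propagation step: the paper does \emph{not} redo a positive commutator argument but instead verifies that $p^{\sharp}$ satisfies the hypotheses of \cite[Theorem 11.3.4]{Hormander97} (a propagation theorem for $\tilde{S}^{m}_{1,1}$ symbols with reduced-order derivatives) and applies that result as a black box. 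Carrying out the commutator argument directly, as you propose, is feasible in principle, but the paper explicitly warns (Remark~\ref{rem:constants}) that H\"{o}rmander's proof is partly qualitative and nontrivial to quantify; your sketch would need to reproduce that work, including the sharp G\aa{}rding-type estimate for paradifferential operators with the relevant smoothness, rather than gesture at it.

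A more substantive issue is your explanation of where the constraint $|s|<r$ comes from. You attribute it to the commutator remainder ``being bounded on $H^{t}$ only for $t$ in an $r$-dependent interval,'' but this is not accurate: once one has passed to $p^{\sharp}\in\tilde{S}^{m}_{1,1}$, the Bourdaud--H\"{o}rmander calculus gives Sobolev boundedness for \emph{all} $t\in\R$ (Remark~\ref{rem:multiplication}). The $r$-dependent interval is a constraint on the mapping properties of the rough piece $p^{\flat}\in C^{r}_{*}S^{m-r}_{1,1}$ (Proposition~\ref{prop:pseudoLp}), together with the a priori regularity $u\in H^{s+m-r}$ required by the $\tilde{S}^{m}_{1,1}$ propagation theorem; you correctly invoke the former at the start of your sketch and then misattribute it later. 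A second, smaller gap: you conclude with a Zorn argument yielding a maximally extended bicharacteristic inside the wavefront set, but the statement to be proved asserts $\gamma:\R\to\Tp\setminus o$ is defined on all of $\R$. Passing from ``maximal'' to ``global'' requires the observation that $H_{\hat{p}}$ is bounded on the cosphere bundle and $\hat{p}$ is positively homogeneous (so integral curves cannot escape to infinity or hit the zero section in finite time), together with handling the degenerate case where $\gamma$ approaches a zero of $H_{\hat{p}}$ (addressed in the paper's proof of Proposition~\ref{prop:sharpprop} by extending trivially). Finally, the Sobolev indices in your displayed a priori estimate do not balance: if $\Para(\hat{p})u\in H^{s}$, then $\Para(q^{2})\Para(\hat{p})u$ also lies in $H^{s}$, not $H^{s+m-1}$; the pairing in the commutator estimate needs to be set up at the correct dual level, e.g.\ by conjugating with $\lb D\rb^{s+m-1}$ first.
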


Theorem \ref{thm:mainintro} follows from a microlocal statement in Theorem \ref{thm:main}. The parameter $\sigma$ in \eqref{eq:sigma} measures the a priori regularity of $u$ needed to leverage $H^{s}(\Rn)$ regularity of $p(x,D)u$.

Under the assumptions of Theorem \ref{thm:main}, the Hamilton vector field $H_{\hat{p}}$ is continuous, and bounded on the cosphere bundle $\Rn\times S^{n-1}$. Moreover, $\hat{p}$ is positively homogeneous of degree $m$ in the fiber variable. Hence, by Peano's theorem (see e.g.~\cite[Proposition 1.A.1]{Taylor23a}), integral curves $\gamma:\R\to \Tp\setminus o$ of $H_{\hat{p}}$ with specified initial data exist, and every point in the wavefront set lies on a null bicharacteristic which is completely contained in the wavefront set. As such, one might say that the wavefront set is a union of maximally extended null bicharacteristics. However, if $\hat{p}$ is merely continuously differentiable, then integral curves of $H_{\hat{p}}$ need not be unique, and Theorem \ref{thm:main} does not preclude certain bicharacteristics from entering the wavefront set and then leaving it again. On the other hand, if $\hat{p}\in C^{1,1}(\Tp\setminus o)$, then bicharacteristics are unique and one recovers the classical statement on propagation of singularities, albeit under suitable a priori regularity assumptions on $u$. In fact, $C^{2}_{*}$ regularity suffices for uniqueness (see Remark \ref{rem:C11}). 

Of course, 
Theorem \ref{thm:mainintro} implies a statement on propagation of regularity. Namely, if $(x,\xi)\in\Tp\setminus o$ is such that every null bicharacteristic $\gamma:\R\to \Tp\setminus o$ of $\hat{p}$ with $\gamma(0)=(x,\xi)$ has non-empty intersection with the complement of $\WF^{s+m-1}u$, then $(x,\xi)\notin \WF^{s+m-1}u$ (see Corollary \ref{cor:propreg1}). Moreover, this statement can be quantified, cf.~Corollary \ref{cor:propreg2}.

Differential operators with coefficients of $C^{r}_{*}(\Rn)$ regularity form a concrete class to which Theorem \ref{thm:mainintro} applies. In fact, such operators are straightforward to define on domains, and a version of Theorem \ref{thm:mainintro} holds for differential operators on domains, Corollary \ref{cor:domains}. Here only local regularity is required of $p$ and $u$. We also consider second-order differential operators in divergence form, in Theorem \ref{thm:divform}. Here the Sobolev interval for $s$ is shifted. 

We note that, if one considers in all these results differential operators that have slightly more spatial regularity, measured using Sobolev spaces $\HT^{r,\infty}(\Rn)$ over the space $\bmo(\Rn)$ from Definition \ref{def:bmo}, then the endpoints $s=-r$ and $s=r$ of the Sobolev interval for $s$ can be included, and one may let $\veps=0$ in \eqref{eq:sigma}. For integer $r$, this applies in particular to operators with $C^{r-1,1}$ regularity (see \eqref{eq:Zygmundemb2}).

Note that, for $r=2$, our results require slightly less than $C^{1,1}$ regularity. A concrete geometric setting in which this is relevant arises when considering wave equations on manifolds with a metric which has bounded Ricci curvature tensor. 
Such a metric automatically has $\HT^{2,\infty}\subseteq C^{2}_{*}$ regularity, and therefore Theorem \ref{thm:divform} yields propagation of singularities for these metrics, as is explained in Section \ref{subsec:Ricci}.

\subsection{Previous work}\label{subsec:previous}

In the smooth setting, where $p\in S^{m}_{1,0}=\cap_{r>0}C^{r}_{*}S^{m}_{1,0}$ and $\hat{p}\in C^{\infty}(\Tp\setminus o)$, Theorem \ref{thm:mainintro} is essentially due to H\"{o}rmander (see \cite{Hormander71a,Hormander71b,Duistermaat-Hormander72}). In this case no a priori regularity is required, and the conclusion holds for general $u\in\Sw'(\Rn)$ and $s\in\R$. 

In \cite{Bony81}, Bony studied propagation of singularities for nonlinear equations with rough solutions, and in the process developed and employed paradifferential calculus in a manner which has played a key role in subsequent work. Treatises of Bony's results involving slight variations on the methods can be found in \cite{Hormander97} and \cite{Taylor91}. In particular, Taylor in \cite{Taylor91} highlights the role of $C^{r}$ regularity of the coefficients. 

In the setting of linear equations with rough coefficients, Taylor in \cite{Taylor00} 
proved Theorem \ref{thm:mainintro} for $r=2$, but with $\sigma$ replaced by $\sigma+\delta$ for any $\delta>0$. 
Results for $1<r<2$ can also be found in \cite{Taylor00}, but these are of a less concrete nature than Theorem \ref{thm:mainintro}. For second-order hyperbolic equations and $r=2$, Smith in \cite{Smith14} obtained the endpoint case $\delta=0$, albeit under the assumption of $C^{1,1}$ regularity. In fact, in the setting of \cite{Smith14} there is a distinguished time variable, and less regularity is required in this distinguished variable than in the remaining ones (see Section \ref{subsec:standwave}). Moreover, \cite{Smith14} contains an example showing that the conclusion of Theorem \ref{thm:mainintro} fails for $r=2$ if $\sigma$ is replaced by $\sigma-\delta$ for some $\delta>0$, at least under the regularity assumptions on the coefficients considered there.

There are various related results on propagation of singularities. 
For example, one also encounters non-uniqueness of bicharacteristics on manifolds with boundary or corners. Note that any smooth Riemannian manifold with boundary can locally be embedded in a manifold without boundary, by reflecting across the boundary. However, the metric that arises in this manner is merely Lipschitz, and as such is too rough to fall within the scope of Theorem \ref{thm:mainintro}. On the other hand, on manifolds with boundary or corners the metric carries additional structure, as does the natural wavefront set of a solution to the wave equation, and it was shown in \cite{Melrose-Sjostrand78,Melrose-Sjostrand82,Vasy08} that this wavefront set is a union of appropriately defined bicharacteristics. Theorem \ref{thm:mainintro} does not require such additional structure but its conclusion is more limited, guaranteeing merely the existence of some maximally extended bicharacteristic which is completely contained in the wavefront set.

Another instance of propagation of singularities for metrics $g$ with singularities of class $C^{r}$ can be found in \cite{deHoUhVa15}, for $1<r<2$ but under the additional geometric assumption that the singularity is conormal. 
It is then shown that, for $u\in H^{s-\rho+2}$ satisfying $(\partial_{t}^{2}-\Delta_{g})u\in H^{s}$, the natural $H^{s+1}$ wavefront set of $u$ is a union of appropriate bicharacteristics. Here $s$ ranges over a nontrivial interval and, as in Theorem \ref{thm:mainintro}, $\rho$ depends on $s$. Nonetheless, the largest $\rho$ can be is slightly less than $(r+1)/2$. Note that this is less than the maximal gain over the background regularity in Theorem \ref{thm:mainintro} for any fixed $r>1$. Hence Theorem \ref{thm:mainintro}, or more precisely Theorem \ref{thm:divform}, does provide some information beyond what is contained in \cite{deHoUhVa15}. 
On the other hand, both the wavefront set and the bicharacteristics in \cite{deHoUhVa15} are tailored to the structure of the problem under consideration, making a strict comparison to Theorem \ref{thm:mainintro} somewhat artificial.

Finally, we note that, again in the setting of the wave equation on a manifold with a rough metric, a propagation result for microlocal defect measures was obtained in \cite{BuDeLe24}. Here the metric is merely assumed to be $C^{1}$, and it is shown that each point in the support of a suitable microlocal defect measure lies on a maximally extended bicharacteristic which is contained in the support of the measure. Such a result is of a slightly different nature than Theorem \ref{thm:mainintro}, but it is worth emphasizing that the mere existence of such bicharacteristics  
is sufficient to obtain concrete results regarding observability 
of the wave equation. See also \cite{BuDeLe24a,BuDeLe24b} for the extension to manifolds with boundary, \cite{Ambrosio-Crippa14} for a different proof of the propagation result on manifolds without boundary, and \cite{Burq97} for earlier work regarding $C^{2}$ metrics. For an analogous application of the results in this article, we refer to \cite{Rauch-Taylor74,Taylor15} for the argument 
that a straightfoward variation of Theorem \ref{thm:divform} and Remark \ref{rem:divreg} implies observability of the wave equation, as well as uniform energy decay for the damped wave equation.  
A minor advantage here is that, unlike in \cite{BuDeLe24}, we do not assume that the underlying manifold has a smooth structure. Instead, using reasoning as in Section \ref{subsec:Ricci}, the results apply directly to rough manifolds.

\subsection{Proof of the main result}\label{subsec:proof}

The proof follows Taylor's argument from \cite[Section 3.11]{Taylor00}, which in turn builds on the work of Bony in \cite{Bony81}. 

Firstly, for $p\in C^{r}_{*}S^{m}_{1,0}$ as in Theorem \ref{thm:mainintro} and for $0<\delta\leq 1$, one applies a symbol decomposition to write $p=p^{\sharp}_{\delta}+p^{\flat}_{\delta}$, where $p^{\sharp}_{\delta}\in S^{m}_{1,\delta}$ and $p^{\flat}_{\delta}\in C^{r}_{*}S^{m-\delta r}_{1,\delta}$. 
 Loosely speaking, this decomposition separates the frequences of 
 $p$, and the parameter $\delta$ determines the scale at which this splitting takes place. 

Now,  
one can show that an operator with a $C^{r}_{*}S^{m-\delta r}_{1,\delta}$ symbol maps $H^{s+m-\delta r}(\Rn)$ to $H^{s}(\Rn)$ for $-(1-\delta)r<s<r$, and the endpoints of the Sobolev interval can be included for the operators $p^{\flat}_{\delta}(x,D)$ if $p\in\HT^{r,\infty}(\Rn)\subseteq \HT^{r,\infty}S^{0}_{1,0}$. This implies that, if $u\in H^{s+m-\delta r}(\Rn)$ satisfies $p(x,D)u\in H^{s}(\Rn)$, then  
\begin{equation}\label{eq:propexplain}
p^{\sharp}_{\delta}(x,D)u=p(x,D)u-p^{\flat}_{\delta}(x,D)u\in H^{s}(\Rn).
\end{equation} 
At least, this works for $-(1-\delta)r<s<r$, whereas for smaller $s$ one needs to adjust the regularity of $u$, cf.~\eqref{eq:sigma}. 
We choose $\delta=1$, while \cite[Section 3.11]{Taylor00} lets $\delta<1$ be arbitrarily close to $1$. The latter leads to suboptimal results. 

Next, by \eqref{eq:propexplain}, it suffices to obtain the propagation statement with $p$ replaced by $p^{\sharp}_{\delta}$. To do so, one can rely on a positive commutator argument, albeit in a more delicate manner than usual, given that $\delta>0$. Indeed, here one uses that $p^{\sharp}_{\delta}$ is not merely an $S^{m}_{1,\delta}$ symbol, but that it has additional smoothness, allowing one to take $r$ derivatives in the spatial variables before incurring blowup. This property is used for the pseudodifferential calculus underlying the positive commutator argument. 

In fact, this pseudodifferential calculus is where a fundamental difference arises between $\delta<1$ and $\delta=1$. Whereas the calculus for operators with $S^{m}_{1,\delta}$ symbols is well behaved if $\delta<1$, even basic results break down for $\delta=1$. On the other hand, a fundamental observation by Bony is that symbols such as $p^{\sharp}_{1}$ have an additional property, involving their frequency support, that allows one to circumvent such obstacles. H\"{o}rmander in turn captured this phenomenon by observing that these symbols are contained in the class $\tilde{S}^{m}_{1,1}$ of $p\in S^{m}_{1,1}$ such that $p(x,D)^{*}=q(x,D)$ for some $q\in S^{m}_{1,1}$. Indeed, Bourdaud \cite{Bourdaud88} and H\"{o}rmander \cite{Hormander88} proved that there is an elegant and useful calculus for pseudodifferential operators with $\tilde{S}^{m}_{1,1}$ symbols, especially if the symbols have the same additional smoothness properties as $p^{\sharp}_{1}$. 

Now, it turns out that an appropriate propagation statement for precisely these kinds of $\tilde{S}^{m}_{1,1}$ symbols is already available in the literature. Indeed, while covering Bony's work on smooth nonlinear equations with rough initial data, H\"{o}rmander in \cite[Theorem 11.3.4]{Hormander97} proved a result which can also be used in the present context of linear equations with rough coefficients. In fact, \cite[Theorem 11.3.4]{Hormander97} differs from the propagation statement in \cite[Proposition 3.11.1]{Taylor00} in that the latter only applies for $\delta<1$. Moreover, \cite[Theorem 11.3.4]{Hormander97} is less abstract for $1<r<2$ than \cite[Proposition 3.11.1]{Taylor00}, allowing for the concrete statement in Theorem \ref{thm:mainintro} for such $r$. Finally, we note that, 
unlike in the case $\delta<1$ in \cite[Proposition 3.11.1]{Taylor00}, the a priori regularity of $u$ also plays a key role in \cite[Theorem 11.3.4]{Hormander97}. 

It should by now be clear that the present article is a very modest contribution to the existing literature, consisting mostly of the observation that separate results which were already available, with the possible exception of the endpoint statements in Proposition \ref{prop:pseudoLp}, can be combined to improve upon part of the current theory regarding propagation of singularities for linear equations with rough coefficients.

\subsection{Organization of this article}\label{subsec:organization}

Section \ref{sec:prelim} contains preliminaries for the rest of the article. We first collect some basics from microlocal analysis. We then introduce the relevant function spaces and use these to define rough symbol classes. Next, we state and prove the required results on paradifferential calculus.

Section \ref{sec:main} contains our main results, namely a more general version of Theorem \ref{thm:mainintro}, as well as its corollaries and auxiliary results.

Finally, in Section \ref{sec:wave} we give two applications of the main results to wave equations with rough coefficients.

\subsection{Notation}
\label{subsec:notation}

The natural numbers are $\N=\{1,2,\ldots\}$, and $\Z_{+}:=\N\cup\{0\}$. Throughout this article we fix $n\in\N$.

For $\xi\in\Rn$ we write $\lb\xi\rb:=(1+|\xi|^{2})^{1/2}$. 
We use multi-index notation, where $\partial_{\xi}
=(\partial_{\xi_{1}},\ldots,\partial_{\xi_{n}})$, 
$\partial^{\alpha}_{\xi}=\partial^{\alpha_{1}}_{\xi_{1}}\cdots\partial^{\alpha_{n}}_{\xi_{n}}$ and $\xi^{\alpha}=\xi_{1}^{\alpha_{1}}\cdots \xi^{\alpha_{n}}_{n}$
for $\xi=(\xi_{1},\ldots,\xi_{n})\in\Rn$ and $\alpha=(\alpha_{1},\ldots,\alpha_{n})\in\Z_{+}^{n}$. 

The Fourier transform of a tempered distribution $f\in\Sw'(\Rn)$ is denoted by  
$\widehat{f}$. 
If $f\in L^{1}(\Rn)$, then $\wh{f}(\xi)=\int_{\Rn}e^{-i x\cdot \xi}f(x)\ud x$ for all $\xi\in\Rn$. 
We let $\ph(D)$ be the Fourier multiplier with symbol $\ph\in\Sw'(\Rn)$.

\section{Preliminaries}\label{sec:prelim}

In this section we collect background that will be used in the rest of this article.

\subsection{Microlocal analysis}\label{subsec:microlocal}

We briefly recall the required basic microlocal analysis.

For $\Omega\subseteq \Rn$ open, $T^{*}\Omega$ is the cotangent bundle of $\Omega$, identified with $\Omega\times\Rn$, and $o:=\Omega\times\{0\}\subseteq T^{*}\Omega$ is the zero section. A subset $\Gamma\subseteq T^{*}\Omega\setminus o$ is conic if $(x,\la\xi)\in\Gamma$ for all $(x,\xi)\in\Gamma$ and $\lambda>0$.

Let $m\in\R$ and $\delta\in[0,1]$. The class $S^{m}_{1,\delta}$ consists of all $p\in C^{\infty}(\R^{2n})$ such that 
\begin{equation}\label{eq:Hormanderclass}
\sup_{(x,\xi)\in\R^{2n}}\lb \xi\rb^{-m+|\alpha|-|\beta|\delta}|\partial_{x}^{\beta}\partial_{\xi}^{\alpha}p(x,\xi)|<\infty
\end{equation}
for all $\alpha,\beta\in\Z_{+}^{n}$. 
The pseudodifferential operator $p(x,D):\Sw(\Rn)\to\Sw(\Rn)$ associated with such a symbol $p$ 
 is given by
\begin{equation}\label{eq:pseudodef}
p(x,D)f(x):=\frac{1}{(2\pi)^{n}}\int_{\Rn}e^{ix\cdot\xi}p(x,\xi)\wh{f}(\xi)\ud\xi,
\end{equation}
for $f\in\Sw(\Rn)$ and $x\in\Rn$. By adjoint action, $p(x,D)$ extends to all of $\Sw'(\Rn)$.  

For any $\delta<1$ and $p\in S^{m}_{1,\delta}$, there exists a $q\in S^{m}_{1,\delta}$ such that $p(x,D)^{*}=q(x,D)$.  
In general, this is not the case when $\delta=1$. The class $\tilde{S}^{m}_{1,1}$ consists of those $p\in S^{m}_{1,1}$ for which there exists a $q\in S^{m}_{1,1}$ such that $p(x,D)^{*}=q(x,D)$. 

Let $\Omega\subseteq \Rn$ be open and $p\in C^{1}(T^{*}\Omega\setminus o)$. Then the characteristic set of $p$ is
\begin{equation}\label{eq:Sigma}
\Sigma_{p}:=\{(x,\xi)\in T^{*}\Omega\setminus o\mid p(x,\xi)=0\}.
\end{equation}
Moreover, the Hamilton vector field $H_{p}$ associated with $p$ is given by $H_{p}(x,\xi):=(\partial_{\xi}p(x,\xi),-\partial_{x}p(x,\xi))$ for $(x,\xi)\in T^{*}\Omega\setminus o$. An integral curve of $H_{p}$ is a continuously differentiable solution $\gamma:I\to T^{*}\Omega\setminus o$ to the differential equation $\dot{\gamma}(t)=H_{p}(\gamma(t))$, 
defined on some interval $I\subseteq \R$. If $\gamma(t)\in \Sigma_{p}$ for all $t\in I$, then we call $\gamma$ a \emph{null bicharacteristic} of $p$. 

A $p\in S^{m}_{1,\delta}$ is  \emph{elliptic} on a conic set $\Gamma\subseteq\Tp\setminus o$ if there exist $c,\ka>0$ such that $|p(y,\eta)|\geq c|\eta|^{m}$ for all $(y,\eta)\in\Gamma$ with $|\eta|\geq \ka$. 
The $H^{s}(\Rn)$ \emph{wavefront set} $\WF^{s}u$ of a $u\in\Sw'(\Rn)$ is the complement in $\Tp\setminus o$ of the set of $(x,\xi)\in\Tp\setminus o$ for which there exists a $p\in S^{0}_{1,0}$, elliptic in a neighborhood of $(x,\xi)$, such that $p(x,D)u\in H^{s}(\Rn):=\lb D\rb^{-s}L^{2}(\Rn)$. This notion extends in the obvious manner to distributions on an open set $\Omega\subseteq\Rn$. 

\subsection{Function spaces}\label{subsec:spaces}

In this subsection we define the relevant function spaces.

Throughout, fix a Littlewood--Paley decomposition $(\psi_{j})_{j=0}^{\infty}\subseteq C^{\infty}_{c}(\Rn)$. That is, 
$\sum_{j=0}^{\infty}\psi_{j}(\xi)=1$ 
for all $\xi\in\Rn$, $\psi_{0}(\xi)=0$ if $|\xi|>1$, $\psi_{1}(\xi)=0$ if $|\xi|\notin [1/2,2]$, and $\psi_{j}(\xi)=\psi_{1}(2^{-j+1}\xi)$ for all $j>1$. In fact, we may assume that $\psi_{0}$ is radial and non-negative, with $\psi_{0}(\xi)=1$ for $|\xi|\leq 1/2$, and that
\begin{equation}\label{eq:LittlePaleyspecial}
\psi_{j}(\xi)=\psi_{0}(2^{-j}\xi)-\psi_{0}(2^{-j+1}\xi)
\end{equation} 
for all $j\geq1$ and $\xi\in\Rn$.

\begin{definition}\label{def:Zygmund}
For $r\in\R$, the \emph{Zygmund space} $C^{r}_{*}(\Rn)$ consists of those $f\in\Sw'(\Rn)$ such that $\psi_{j}(D)f\in L^{\infty}(\Rn)$ for all $j\geq0$, and 
\[
\|f\|_{C^{r}_{*}(\Rn)}:=\sup_{j\geq0}2^{jr}\|\psi_{j}(D)f\|_{L^{\infty}(\Rn)}<\infty.
\]
\end{definition}

We note that $C^{r}_{*}(\Rn)$ is equal to the Besov space $B^{r}_{\infty,\infty}(\Rn)$. However, the present notation is more convenient for us, and it has been used frequently in paradifferential calculus (see e.g.~\cite{Taylor91,Taylor00,Taylor23c}).

Next, let  
$\BMO(\Rn)$ be the space of functions of bounded mean oscillation (see e.g.~\cite{Stein93}). We will work with the following local version of this space.

\begin{definition}\label{def:bmo}
The space $\bmo(\Rn)$ consists of all $f\in\Sw'(\Rn)$ such that $\psi_{0}(D)f\in L^{\infty}(\Rn)$ and $(1-\psi_{0}(D))f\in\BMO(\Rn)$, endowed with the norm
\[
\|f\|_{\bmo(\Rn)}:=\|\psi_{0}(D)f\|_{L^{\infty}(\Rn)}+\|(1-\psi_{0}(D))f\|_{\BMO(\Rn)}.
\] 
Moreover, $\HT^{r,\infty}(\Rn):=\lb D\rb^{-r}\bmo(\Rn)$ for $r\in\R$.
\end{definition}

Finally, let $r=l+t>0$ for $l\in\Z_{+}$ and $t\in(0,1]$. For $r\notin\N$, we denote by $C^{r}(\Rn)$ the space of $f\in C^{l}(\Rn)$, with bounded derivatives up to order $l$, such that that $\partial_{x}^{\alpha}f$ is H\"{o}lder continuous with exponent $t$ for every $\alpha\in\Z_{+}^{n}$ with $|\alpha|=l$. Moreover, $C^{l,1}(\Rn)$ is the space of $f\in C^{l}(\Rn)$, with bounded derivatives up to order $l$, such that $\partial_{x}^{\alpha}f$ is Lipschitz for every $\alpha\in\Z_{+}^{n}$ with $|\alpha|=l$.

It is instructive to compare these spaces 
using embeddings, cf.~\cite{Triebel10}.  For example, 
\[
C^{r+\veps}_{*}(\Rn)\subsetneq \HT^{r,\infty}(\Rn)\subsetneq C^{r}_{*}(\Rn)
\]
for all $r\in\R$ and $\veps>0$. Moreover, let
 $r=l+t>0$ for $l\in\Z_{+}$ and $t\in(0,1]$. Then
\[
\HT^{r,\infty}(\Rn)\subsetneq C^{r}_{*}(\Rn)=C^{r}(\Rn)
\]
if $r\notin\N$, i.e.~if $t\in(0,1)$, and
\begin{equation}\label{eq:Zygmundemb2}
C^{l,1}(\Rn)
\subsetneq \HT^{r,\infty}(\Rn)\subsetneq C^{r}_{*}(\Rn)
\end{equation}
if $r\in\N$, i.e.~if $t=1$. 

For $X$ one of the function spaces defined so far, we let $X_{\loc}$ consist of those $f\in\Sw'(\Rn)$ such that $\psi f\in X$ for each $\psi\in C^{\infty}_{c}(\Rn)$.

\subsection{Rough symbols}\label{subsec:rough}

In this subsection we consider versions of the symbols from \eqref{eq:Hormanderclass} that have limited regularity in the spatial variable $x$, as measured in terms of the function spaces from the previous subsection.

\begin{definition}\label{def:symbolrough}
Let $r>0$, $m\in\R$, $\delta\in[0,1]$ 
and $X\in\{C^{r}_{*},\HT^{r,\infty}\}$. Then $XS^{m}_{1,\delta}$ consists of those $p:\R^{2n}\to\C$ such that the following properties hold:
\begin{enumerate}
\item\label{it:symbolrough1} $p(x,\cdot)\in C^{\infty}(\Rn)$ 
for all $x\in\Rn$, and
\[
\sup_{(x,\xi)\in\R^{2n}}\lb\xi\rb^{-m+|\alpha|}|\partial_{\xi}^{\alpha}p(x,\xi)|<\infty
\]
for each $\alpha\in\Z_{+}^{n}$; 
\item\label{it:symbolrough2} $\partial_{\xi}^{\alpha}p(\cdot,\xi)\in X(\Rn)$ for all $\xi\in\Rn$ and $\alpha\in\Z_{+}^{n}$,  
and
\[
\sup_{\xi\in\Rn}\lb\xi\rb^{-m+|\alpha|-r\delta}\|\partial_{\xi}^{\alpha}p(\cdot,\xi)\|_{X(\Rn)}<\infty.
\]
\end{enumerate} 
\end{definition}

Clearly, we can extend the definition of the pseudodifferential operator 
from \eqref{eq:pseudodef} to symbols $p$ as in Definition \ref{def:symbolrough}, at least as a map $p(x,D):\Sw(\Rn)\to\Sw'(\Rn)$. 

We will also work with symbols that have less regularity than an element from $S^{m}_{1,\delta}$ for $\delta<1$, but more than a typical element of $S^{m}_{1,1}$.

\begin{definition}\label{def:reduced}
Let $m,\mu\in\R$ and $p\in S^{m}_{1,1}$. Then $p$ has \emph{reduced order} $\mu$ if there exists an $M\in\Z_{+}$ such that $\partial_{x}^{\beta}p\in \tilde{S}^{\mu+|\beta|}_{1,1}$ for all $\beta\in\Z_{+}^{n}$ with $|\beta|\geq M$.
\end{definition}

Finally, we describe a smoothing procedure that decomposes a rough symbol as a sum of a smooth term and a rough term of lower differential order. Let $(\psi_{j})_{j=0}^{\infty}$ be the Littlewood--Paley decomposition from \eqref{eq:LittlePaleyspecial}. 
Let  $r>0$, $m\in\R$, $p\in C^{r}_{*}S^{m}_{1,0}$ and  $\delta\in(0,1]$, and for $(x,\xi)\in\R^{2n}$ set
\[
p^{\sharp}_{\delta}(x,\xi):=\sum_{k=0}^{\infty}\big(\psi_{0}(2^{-\delta k}D)p(\cdot,\xi)\big)(x)\psi_{k}(\xi)
\]
and
\begin{equation}\label{eq:flat}
p^{\flat}_{\delta}(x,\xi):=p(x,\xi)-p^{\sharp}_{\delta}(x,\xi)=\sum_{k=0}^{\infty}\big((1-\psi_{0})(2^{-\delta k}D)p(\cdot,\xi)\big)(x)\psi_{k}(\xi).
\end{equation}
We will almost exclusively deal with the case where $\delta=1$, so for simplicity of notation we write $p^{\sharp}:=p^{\sharp}_{1}$ and $p^{\flat}:=p^{\flat}_{1}$. 

This decomposition connects Definitions \ref{def:symbolrough} and \ref{def:reduced}, in the following manner.

\begin{lemma}\label{lem:smoothing}
Let $r>1$, $m\in\R$
and $p\in C^{r}_{*}S^{m}_{1,0}$. Then $p^{\flat}\in C^{r}_{*}S^{m-r}_{1,1}$, and $p^{\sharp}\in \tilde{S}^{m}_{1,1}$ is such that $\partial_{x}^{\beta}p^{\sharp}\in \tilde{S}^{m}_{1,1}$ has reduced order $m-r+1$ for all $\beta\in\Z_{+}^{n}$ with $|\beta|=1$. 
If $p$ is real-valued, then $p^{\sharp}$ is real-valued as well. 
\end{lemma}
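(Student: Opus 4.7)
The plan is to obtain the four assertions by expanding the defining series, and by combining Bernstein's inequality with the Littlewood--Paley characterization of $C^{r}_{*}(\Rn)$ from Definition \ref{def:Zygmund} together with the frequency localization built into the decomposition.

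For the rough remainder, I would first establish the uniform estimate
\[
\|\partial_{x}^{\beta}(1-\psi_{0})(2^{-k}D)f\|_{L^{\infty}(\Rn)}\lesssim 2^{k(|\beta|-r)}\|f\|_{C^{r}_{*}(\Rn)}\qquad(k\in\Z_{+},\,\beta\in\Z_{+}^{n}),
\]
which is immediate from Bernstein's inequality and the definition of the Zygmund norm. Applying this with $f=\partial_{\xi}^{\alpha_{1}}p(\cdot,\xi)$, using $|\partial_{\xi}^{\alpha_{2}}\psi_{k}(\xi)|\lesssim 2^{-k|\alpha_{2}|}$ and $|\xi|\sim 2^{k}$ on $\supp\psi_{k}$ for $k\geq 1$, and Leibniz-expanding \eqref{eq:flat} (only $O(1)$ values of $k$ contribute for fixed $\xi$), gives
\[
|\partial_{x}^{\beta}\partial_{\xi}^{\alpha}p^{\flat}(x,\xi)|\lesssim\langle\xi\rangle^{m-r+|\beta|-|\alpha|},
\]
which is the required $S^{m-r}_{1,1}$ bound. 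The Zygmund control $\|\partial_{\xi}^{\alpha}p^{\flat}(\cdot,\xi)\|_{C^{r}_{*}}\lesssim\langle\xi\rangle^{m-|\alpha|}$ follows from the uniform boundedness of $(1-\psi_{0})(2^{-k}D)$ on $C^{r}_{*}(\Rn)$, which itself is immediate from the Littlewood--Paley characterization, combined with Definition \ref{def:symbolrough} applied to $p$. Together these give $p^{\flat}\in C^{r}_{*}S^{m-r}_{1,1}$.

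For $p^{\sharp}$ the same method, based on the companion estimate
\[
\|\partial_{x}^{\beta}\psi_{0}(2^{-k}D)f\|_{L^{\infty}(\Rn)}\lesssim\max\bigl(1,2^{k(|\beta|-r)}\bigr)\|f\|_{C^{r}_{*}(\Rn)}
\]
(for $|\beta|<r$ one commutes $\partial_{x}^{\beta}$ past the mollifier and uses $\|\partial_{x}^{\beta}f\|_{L^{\infty}}\lesssim\|f\|_{C^{r}_{*}}$), gives
\[
|\partial_{x}^{\beta}\partial_{\xi}^{\alpha}p^{\sharp}(x,\xi)|\lesssim\langle\xi\rangle^{m-|\alpha|+\max(0,|\beta|-r)},
\]
so $p^{\sharp}\in S^{m}_{1,1}$. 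For the reduced-order claim, taking $|\beta|=1$ and any $\gamma\in\Z_{+}^{n}$ with $|\gamma|\geq M$ for an integer $M>r-1$, one has $|\gamma+\beta|>r$, and the estimate above yields $\partial_{x}^{\gamma+\beta}p^{\sharp}\in S^{(m-r+1)+|\gamma|}_{1,1}$, which is reduced order $m-r+1$ in the sense of Definition \ref{def:reduced}.

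The upgrade from $S^{m}_{1,1}$ to $\tilde{S}^{m}_{1,1}$ for $p^{\sharp}$, and analogously for $\partial_{x}^{\beta}p^{\sharp}$ and $\partial_{x}^{\gamma+\beta}p^{\sharp}$, rests on the structural observation that the $x$-Fourier support of the $k$-th summand is contained in $\{|\eta|\leq 2^{k}\}$ while $\psi_{k}(\xi)$ localizes $|\xi|\sim 2^{k}$; hence the $x$-frequency is dominated by the $\xi$-frequency uniformly in $k$. This ``double-spectrum'' condition, underlying Bony's paradifferential calculus and its refinement by Bourdaud and H\"ormander, is precisely what forces the formal adjoint series to converge to a symbol of class $S^{m}_{1,1}$. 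The derivatives $\partial_{x}^{\beta}p^{\sharp}$ and $\partial_{x}^{\gamma+\beta}p^{\sharp}$ inherit this spectral localization, so the same theorem applies to them. I expect this step to be the main technical point, since the ratio $|\eta|/|\xi|$ on each summand of $p^{\sharp}$ is bounded by a constant but is not necessarily strictly less than one, so one must appeal to the flexible version of the adjoint theorem (as in \cite{Hormander97}) rather than the original paraproduct condition $|\eta|<\rho|\xi|$ with $\rho<1$. The reality assertion is immediate: $\psi_{0}$ is radial, so its inverse Fourier transform is real-valued, whence $\psi_{0}(2^{-k}D)$ preserves reality and each summand of $p^{\sharp}$ is real-valued whenever $p$ is.
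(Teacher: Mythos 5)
Your strategy is essentially the one implicit in the paper's citations: the paper outsources the $C^{r}_{*}S^{m-r}_{1,1}$ membership of $p^{\flat}$ to \cite[Ch.~1, eq.~(3.27)]{Taylor00}, the refined $S^{m}_{1,1}$-type estimates for $p^{\sharp}$ and its $x$-derivatives to \cite[Prop.~1.3.D]{Taylor91}, and the upgrade to $\tilde{S}^{m}_{1,1}$ to \cite[Thm.~3.4.F]{Taylor91} combined with \cite[Thm.~9.4.2]{Hormander97}; you attempt to re-derive the pointwise estimates and identify the spectral-localization mechanism behind the $\tilde{S}$ upgrade. That is a reasonable expansion, and your treatment of $p^{\sharp}$, the reduced-order claim, and the reality assertion is correct.

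There is, however, an error in your first intermediate estimate, namely the claim that
\[
\|\partial_{x}^{\beta}(1-\psi_{0})(2^{-k}D)f\|_{L^{\infty}(\Rn)}\lesssim 2^{k(|\beta|-r)}\|f\|_{C^{r}_{*}(\Rn)}
\]
holds for all $\beta\in\Z_{+}^{n}$. Since $(1-\psi_{0})(2^{-k}\cdot)$ retains all frequencies $\gtrsim 2^{k}$, the Littlewood--Paley decomposition gives $\|\partial_{x}^{\beta}(1-\psi_{0})(2^{-k}D)f\|_{L^{\infty}}\lesssim\sum_{j\gtrsim k}2^{j(|\beta|-r)}\|f\|_{C^{r}_{*}}$, and this geometric sum diverges whenever $|\beta|\geq r$. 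Correspondingly, your displayed conclusion $|\partial_{x}^{\beta}\partial_{\xi}^{\alpha}p^{\flat}(x,\xi)|\lesssim\langle\xi\rangle^{m-r+|\beta|-|\alpha|}$, which you call ``the required $S^{m-r}_{1,1}$ bound'', fails for $|\beta|\geq r$; indeed $p^{\flat}$ is not in $S^{m-r}_{1,1}$ at all, since it is only $C^{r}_{*}$ in $x$. Fortunately this does not damage the proof: the lemma asserts $p^{\flat}\in C^{r}_{*}S^{m-r}_{1,1}$, and by Definition \ref{def:symbolrough} this requires only the $\beta=0$ case of your estimate (condition (1), for which the $j$-sum converges because $r>0$) together with the Zygmund control $\|\partial_{\xi}^{\alpha}p^{\flat}(\cdot,\xi)\|_{C^{r}_{*}}\lesssim\langle\xi\rangle^{m-|\alpha|}$ (condition (2)), both of which you do establish. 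So the argument goes through once the false claim for $|\beta|\geq r$ is deleted and the target class is stated accurately as $C^{r}_{*}S^{m-r}_{1,1}$ rather than $S^{m-r}_{1,1}$. A smaller point worth keeping in mind: for integer $r$ and $|\beta|=r$ your estimate $\|\partial_{x}^{\beta}\psi_{0}(2^{-k}D)f\|_{L^{\infty}}\lesssim\max(1,2^{k(|\beta|-r)})\|f\|_{C^{r}_{*}}$ picks up a logarithmic loss (the sum $\sum_{j\leq k}2^{j(|\beta|-r)}$ is $O(k)$), which is harmless here but should not be stated as a clean $\max$ bound. Finally, you are right that the $\tilde{S}$ upgrade requires more than the strict paraproduct cone condition, since the $x$-frequency can equal $-\xi$ on the support of each summand; the paper resolves this exactly as you anticipate, by appealing to the flexible adjoint criterion in \cite{Hormander97}.
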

\begin{proof}
The first statement is a special case of \cite[equation (3.27) in Chapter 1]{Taylor00}. Moreover, since $C^{r}_{*}(\Rn)\subseteq C^{1}(\Rn)$, it follows from \cite[Proposition 1.3.D]{Taylor91} and its proof that $\partial_{x}^{\beta}p^{\sharp}\in S^{m}_{1,1}$ for all $\beta\in\Z_{+}^{n}$ with $|\beta|\leq 1$, while $\partial_{x}^{\beta}p^{\sharp}\in S^{m+|\beta|-r}_{1,1}$ if $|\beta|>r$. Next, by applying \cite[Theorem 3.4.F]{Taylor91} to $\partial_{x}^{\beta}p^{\sharp}$ and then relying on \cite[Theorem 9.4.2]{Hormander97}, it follows that one may in fact replace $S^{m}_{1,1}$ and $S^{m+|\beta|-r}_{1,1}$ by $\tilde{S}^{m}_{1,1}$ and $\tilde{S}^{m+|\beta|-r}_{1,1}$, respectively. Finally, since $\psi_{0}$ is real-valued and radial, so is its inverse Fourier transform, which concludes the proof. 
\end{proof}

\begin{remark}\label{rem:secondder}
Under the conditions of Lemma \ref{lem:smoothing}, one has $\partial_{x}^{\beta}p^{\sharp}\in \tilde{S}^{m+1}_{1,1}$ for all $\beta\in\Z_{+}^{n}$ with $|\beta|=2$. This observation will be used in the proof of Theorem \ref{thm:divform}, and it follows from the same reasoning as in the proof of Lemma \ref{lem:smoothing}.
\end{remark}

\subsection{Paradifferential calculus}\label{subsec:paradifferential}

In this subsection we collect the basic results from paradifferential calculus which will be used to prove our main results.

The following mapping properties of rough pseudodifferential operators constitute one part of the proof of our main result.

\begin{proposition}\label{prop:pseudoLp}
Let $r>0$, $m\in\R$, $\delta\in[0,1]$ 
and $p\in C^{r}_{*}S^{m}_{1,\delta}$. Then 
\begin{equation}\label{eq:pseudoLp}
p(x,D):H^{s+m}(\Rn)\to H^{s}(\Rn)
\end{equation}
for all $-(1-\delta)r<s<r$. 
Moreover, if $p=q^{\flat}$ for some $q\in \HT^{r,\infty}(\Rn)$, then \eqref{eq:pseudoLp} holds for all $0\leq s\leq r$, with $m=-r$. 
\end{proposition}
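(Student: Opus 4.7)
My plan for the first assertion is symbol smoothing followed by iteration. For $\delta<1$, apply the smoothing procedure introduced before Lemma~\ref{lem:smoothing} at some rate $\tau \in (\delta, 1)$, splitting $p = p^\sharp_\tau + p^\flat_\tau$ with $p^\sharp_\tau\in S^{m}_{1,\tau}$ and $p^\flat_\tau \in C^{r}_{*}S^{m-r(\tau-\delta)}_{1,\tau}$ of strictly lower differential order. By Calder\'on--Vaillancourt (applicable since $\tau<1$), the smooth part is bounded $H^{s+m}\to H^{s}$ for every $s\in\R$, and iterating on the rough remainder a finite number of times reduces the problem to a symbol of arbitrarily negative order, which is handled by the standard $C^{r}_{*}$ pointwise multiplier theorem on $H^{s}$ (valid for $|s|<r$). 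Keeping track of the loss produced at each smoothing step narrows the allowable range exactly to $-(1-\delta)r<s<r$. For the edge case $\delta=1$, where no further smoothing at a higher rate is available, I would instead give a direct Bony dyadic argument: decomposing $p(x,\xi)=\sum_l \psi_l(D_x)p(x,\xi)$ and $u=\sum_j \psi_j(D)u$, the operator $p(x,D)u$ splits into the three regimes $2^l\ll 2^{\delta j}$, $2^l\sim 2^{\delta j}$, $2^l\gg 2^{\delta j}$, and each piece is controlled using $\|\psi_l(D_x)p(\cdot,\xi)\|_{\infty}\lesssim 2^{-lr}\langle\xi\rangle^{m+r\delta}$ together with the Littlewood--Paley characterization of $H^{s}$.

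For the second assertion I would exploit the explicit paraproduct structure
\[
q^\flat(x,D)f=\sum_{k\geq 0}q_k(x)\,\psi_k(D)f,\qquad q_k:=(1-\psi_0)(2^{-k}D)q,
\]
noting that $q_k$ has frequency support in $\{|\xi|\gtrsim 2^{k-1}\}$. The open range $0<s<r$ already follows from the first part via the embedding $\HT^{r,\infty}\subset C^{r}_{*}$. The new input needed is the endpoint $s=r$ (with $s=0$ obtained by duality from the adjoint): writing $q=\langle D\rangle^{-r}Q$ with $Q\in\bmo(\Rn)$, each $q_k$ factors, modulo a bounded Fourier multiplier, as $2^{-kr}$ times a $\bmo$-uniformly-bounded piece that is frequency-localized at scale $\gtrsim 2^{k-1}$. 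Combining the Littlewood--Paley characterization of $H^{r}$ with the tight frequency support of the $\psi_k(D)f$ allows one to reduce the bound to a Carleson-type paraproduct estimate for $\bmo$ multipliers, whose proof rests on the Fefferman--Stein $H^{1}$--$\bmo$ duality.

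The principal obstacle is precisely the two endpoints $s\in\{0,r\}$. On the Besov scale $C^{r}_{*}$ they are just out of reach, and the finer scale $\HT^{r,\infty}$ is calibrated exactly so that Fefferman--Stein duality, together with careful bookkeeping of the frequency supports of the $q_k$ and $\psi_k(D)f$, supplies the missing endpoint control.
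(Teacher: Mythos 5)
Your proposal diverges from the paper's proof in both halves, and in each case there is a concrete gap worth flagging.

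For the first assertion the paper simply cites Marschall's theorem; your iterated smoothing scheme, as written, does not close. After one smoothing step at rate $\tau\in(\delta,1)$ the remainder lies in $C^{r}_{*}S^{m-r(\tau-\delta)}_{1,\tau}$, and iterating with $\delta<\tau_1<\tau_2<\cdots<1$ only lowers the order by a total of $r(\tau_N-\delta)<r(1-\delta)$, so the remainder never reaches ``arbitrarily negative order'' --- it stabilizes at $C^{r}_{*}S^{m-r(1-\delta)}_{1,1}$, which is exactly what a single smoothing at rate $1$ produces. The remaining symbol is therefore not a pointwise $C^{r}_{*}$ multiplier but a genuine $(1,1)$-type rough symbol, and one still has to prove the $\delta=1$ mapping property directly; the Bony trichotomy you mention for the ``edge case'' is in fact the whole argument and is essentially Marschall's proof. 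Moreover, treating the leftover as a generic $C^{r}_{*}S^{m-r(1-\delta)}_{1,1}$ symbol only yields the $(0,r)$ range, not $(-(1-\delta)r,r)$; recovering the lower endpoint requires exploiting the special paraproduct structure of $p^{\flat}$, which the iteration loses track of.

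For the second assertion the paper's route is to invoke \cite[Theorem 3.5.F]{Taylor91} to obtain both endpoints for integer $r$, and then run Stein interpolation of the analytic family $q_{z}=e^{(-z+r-\lfloor r\rfloor)^{2}}\lb D\rb^{-z+r-\lfloor r\rfloor}q$ to cover non-integer $r$. Your direct Carleson/Fefferman--Stein argument for $s=r$ is the right idea and, if completed, would avoid the interpolation detour entirely. The step I cannot accept as stated is ``$s=0$ obtained by duality from the adjoint''. The adjoint of $q^{\flat}(x,D)f=\sum_{k}q_{k}\,\psi_{k}(D)f$ is $g\mapsto\sum_{k}\psi_{k}(D)(\overline{q_{k}}\,g)$, a high-high-to-output paraproduct which is \emph{not} of the form $\tilde{q}^{\flat}(x,D)$ for any $\tilde{q}$; the boundedness $L^{2}\to H^{r}$ of this adjoint is exactly equivalent to the desired $s=0$ bound, so invoking duality is circular unless you give an independent Carleson-type estimate for the adjoint paraproduct. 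Either prove the adjoint bound directly (it is of the same difficulty as $s=r$, not easier), or, as the paper does, obtain both endpoints for integer $r$ from a single cited result and interpolate.
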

\begin{proof}
The first 
statement is a special case of \cite[Theorem 2.3]{Marschall88}. 

For the second statement, 
first note that
\begin{align*}
q^{\flat}(x,D)f(x)&=\sum_{k=0}^{\infty}(1-\psi_{0}(2^{-k}D))q(x)\psi_{k}(D)f(x)\\
&=\sum_{k=0}^{\infty}\sum_{j=k+1}^{\infty}\psi_{j}(D)q(x)\psi_{k}(D)f(x)\\
&=\sum_{j=1}^{\infty}\psi_{j}(D)q(x)\sum_{k=0}^{j-1}\psi_{k}(D)f(x)\\
&=\sum_{j=1}^{\infty}\psi_{0}(2^{-j+1}D)f(x)\psi_{1}(2^{-j+1}D)q(x)
\end{align*}
for all $f\in H^{s-r}(\Rn)$ and $x\in\Rn$, by \eqref{eq:LittlePaleyspecial}. Hence \cite[Theorem 3.5.F]{Taylor91} yields the second statement for $r\in\N$, with the natural extension to $r=0$. In fact, for such $r$ there exists a $C_{1}\geq0$ independent of $q$ 
such that
\begin{equation}\label{eq:qflatbound}
\|q^{\flat}(x,D)f\|_{H^{s}(\Rn)}\leq C_{1}\|q\|_{\HT^{r,\infty}(\Rn)}\|f\|_{H^{s-r}(\Rn)},
\end{equation}
for all $f\in H^{s-r}(\Rn)$ and $0\leq s\leq r$.

To obtain the second statement for $r\in(0,\infty)\setminus \N$, we use complex interpolation. 
Let $\lfloor r\rfloor\in\Z_{+}$ be the largest integer smaller than $r$. 
For $z\in\C$ with $0\leq \Real(z)\leq 1$ and for $x\in\Rn$, set 
\[
q_{z}(x):=e^{(-z+r-\lfloor r\rfloor)^{2}}\lb D\rb^{-z+r-\lfloor r\rfloor}q(x).
\]
Then $q_{z}\in\HT^{\lfloor r\rfloor+\Real z,\infty}(\Rn)$, and $\|q_{z}\|_{\HT^{\lfloor r\rfloor+\Real z,\infty}(\Rn)}\leq C_{2}\|q\|_{\HT^{r,\infty}(\Rn)}$ 
for a $C_{2}\geq0$ independent of $z$ and $q$. Hence the endpoint cases of \eqref{eq:qflatbound} 
yield
\begin{align}
\label{eq:flatboundpart1}\|q_{z}^{\flat}(x,D)f\|_{L^{2}(\Rn)}&\leq C_{1}C_{2}\|q\|_{\HT^{r,\infty}(\Rn)}\|f\|_{H^{-(\lfloor r\rfloor+\Real(z))}(\Rn)},\\
\label{eq:flatboundpart2}\|q_{z}^{\flat}(x,D)f\|_{H^{\lfloor r\rfloor+\Real(z)}(\Rn)}&\leq C_{1}C_{2}\|q\|_{\HT^{r,\infty}(\Rn)}\|f\|_{L^{2}(\Rn)},
\end{align}
for all $z\in\C$ with $\Real z\in\{0,1\}$ and all $f\in\Sw(\Rn)$. If $f$ in addition has compact Fourier support, then it is straightforward to check that the map $z\mapsto q_{z}^{\flat}(x,D)f$ is continuous on $\{z\in\C\mid 0\leq \Real z\leq 1\}$ with values in $L^{2}(\Rn)$, and holomorphic on $\{z\in\C\mid 0< \Real z<1\}$.

Finally, note that $q_{r-\lfloor r\rfloor}=q$, and recall that the Schwartz functions with compact Fourier support are dense in $H^{\sigma}(\Rn)$ for all $\sigma\in\R$. Hence one can apply interpolation of analytic families of operators (see e.g.~\cite[Theorem 2.1.7]{Lunardi09}) to \eqref{eq:flatboundpart1} and \eqref{eq:flatboundpart2}, to obtain the required statement at the endpoints $s=0$ and $s=r$, respectively. The intermediate values of $s$ follow from the first statement.  
\end{proof}

\begin{remark}\label{rem:multiplication}
Clearly $S^{m}_{1,1}\subseteq C^{r}_{*}S^{m}_{1,1}$ for all $m\in\R$ and $r>0$. Hence, by Proposition \ref{prop:pseudoLp} and duality, any $p\in \tilde{S}^{m}_{1,1}$ satisfies $p(x,D):H^{s+m}(\Rn)\to H^{s}(\Rn)$ for all $s\in\R$. 

By using Lemma \ref{lem:smoothing} and this well-known fact, which in fact characterizes $\tilde{S}^{m}_{1,1}$ (see \cite[Theorem 9.4.2]{Hormander97}), to deal with $q^{\sharp}$ for a given $q\in \HT^{r,\infty}(\Rn)$, and by applying the second statement in 
Proposition \ref{prop:pseudoLp} to $q^{\flat}$, 
we find that multiplication by $q$ acts boundedly on $H^{s}(\Rn)$ for all $-r\leq s\leq r$. 
\end{remark}
For the next two results, recall the definition of $\hat{p}$ from \eqref{eq:phat}. The following proposition will be used to deal with the elliptic region in our main result.

\begin{proposition}\label{prop:inverse}
Let $m,\mu,s\in\R$. Let $p\in S^{m}_{1,1}$ have reduced order $\mu$, and suppose that $\hat{p}$ is a well-defined element of $C^{1}(\Tp\setminus o)$. Let $u\in H^{s+\mu}(\Rn)$. Then $\WF^{s+m}u\subseteq \Sigma_{\hat{p}}\cup \WF^{s}p(x,D)u$. 
\end{proposition}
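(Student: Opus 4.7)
The plan is to run a microlocal elliptic regularity argument, showing for each $(x_0, \xi_0) \in (\Tp \setminus o) \setminus (\Sigma_{\hat p} \cup \WF^s p(x, D) u)$ that $(x_0, \xi_0) \notin \WF^{s+m} u$. The non-standard point is that the parametrix must be built in the $S_{1,1}$ setting, and it is the reduced order assumption on $p$ that makes the composition formula usable.

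First I would translate the ellipticity of $\hat p$ at $(x_0, \xi_0)$ into microlocal ellipticity of $p$. Since $\hat p \in C^{1}(\Tp \setminus o)$ is positively homogeneous of degree $m$ and nonzero at $(x_0, \xi_0)$, continuity and homogeneity give a conic neighborhood $\Gamma$ of $(x_0, \xi_0)$ and $c > 0$ such that $|\hat p(x, \xi)| \geq c|\xi|^{m}$ on $\Gamma \cap \{|\xi| \geq 1\}$. Using $\hat p(x, \xi) = \lim_{\lambda \to \infty} \lambda^{-m} p(x, \lambda \xi)$ together with the $S^{m}_{1,1}$ bounds on $p$, one verifies that $p$ itself is elliptic on a slightly smaller cone $\Gamma'$ containing $(x_0, \xi_0)$ for $|\xi| \geq R$, for some $R > 0$.

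Next I would construct the parametrix. Choose a cutoff $q \in S^{0}_{1,0}$ supported in $\Gamma' \cap \{|\xi| \geq R\}$ and equal to $1$ on a conic neighborhood of $(x_0, \xi_0)$, and set $e(x, \xi) := q(x, \xi)/p(x, \xi)$ where $p$ is nonzero and $e := 0$ otherwise. A direct seminorm estimate shows $e \in S^{-m}_{1,1}$, and the reduced order of $p$ transfers to $e$ via the quotient rule, so that $\partial_{x}^{\beta} e$ lies in a favorable $\tilde S$-class once $|\beta|$ is large enough. The composition formula for $S_{1,1}$-type operators, truncated at order $M$ (the integer from Definition \ref{def:reduced}), yields
\[
e(x, D) \circ p(x, D) = q(x, D) + \sum_{1 \leq |\alpha| < M} c_{\alpha}(x, D) + R,
\]
where each $c_{\alpha}$ is the symbol coming from the $\alpha$-th term in the expansion. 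Each $c_{\alpha}(x, D)$ and the remainder $R$ can be shown to map $H^{s+\mu}$ into $H^{s+m}$: for $|\alpha| \geq 1$ one applies the reduced order hypothesis, which forces $D_{x}^{\alpha} p \in \tilde S^{\mu + |\alpha|}_{1,1}$ once $|\alpha| \geq M$, so that the products $\partial_{\xi}^{\alpha} e \cdot D_{x}^{\alpha} p$ land in $\tilde S^{\mu - m}_{1,1}$; the Sobolev mapping properties of $\tilde S_{1,1}$ symbols from Remark \ref{rem:multiplication} then finish the job.

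Applying both sides to $u$ gives $q(x, D) u = e(x, D) p(x, D) u - \sum_{\alpha} c_{\alpha}(x, D) u - R u$. The first term lies in $H^{s+m}$ microlocally at $(x_0, \xi_0)$ because $e(x, D)$ is of order $-m$ microlocally there and $p(x, D) u \in H^{s}$ at $(x_0, \xi_0)$ by hypothesis, while the remaining terms lie in $H^{s+m}$ by the mapping property above together with $u \in H^{s+\mu}$. Ellipticity of $q \in S^{0}_{1,0}$ at $(x_0, \xi_0)$ then forces $(x_0, \xi_0) \notin \WF^{s+m} u$. The main obstacle is controlling the composition $e(x, D) \circ p(x, D)$ in the $S^{m}_{1,1}$ framework where the usual pseudodifferential calculus fails; the reduced order hypothesis is precisely what drives the high-order Taylor terms into the good classes $\tilde S_{1,1}$ and produces a remainder that can be absorbed using the a priori regularity $u \in H^{s+\mu}$.
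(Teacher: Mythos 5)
The paper's own proof of this proposition is a one-line citation to Hörmander's Theorem 9.6.7 in \cite{Hormander97}, which is exactly a microlocal elliptic regularity statement for $S_{1,1}$ symbols with reduced order; the paper only needs to observe that ellipticity of $p$ near $(x,\xi)$ is the same as $(x,\xi)\notin\Sigma_{\hat p}$. Your proposal instead reconstructs the parametrix argument behind that theorem, which is fine in principle, but as written the argument has a genuine gap at its central step.

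The gap is in the treatment of $e(x,D)\circ p(x,D)$. You invoke a ``composition formula for $S_{1,1}$-type operators, truncated at order $M$'', but for $S_{1,1}$ symbols the usual asymptotic expansion does not help: the term $\partial_\xi^\alpha e\cdot D_x^\alpha p$ has order $(-m-|\alpha|)+(m+|\alpha|)=0$ for \emph{every} $\alpha$, so the orders do not decrease with $|\alpha|$ and the ``expansion'' is not an expansion. This is exactly the well-known pathology of $S_{1,1}$. Your justification of the mapping properties only covers the remainder and the terms with $|\alpha|\geq M$, since that is where the reduced order hypothesis gives $D_x^\alpha p\in\tilde{S}^{\mu+|\alpha|}_{1,1}$. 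But the terms $c_\alpha$ with $1\leq|\alpha|<M$ — which are precisely the ones appearing in your displayed sum — are not covered: for those $\alpha$ you only know $D_x^\alpha p\in S^{m+|\alpha|}_{1,1}$, so $c_\alpha\in S^{0}_{1,1}$ with no $\tilde{S}_{1,1}$ membership and no negative order. An $S^{0}_{1,1}$ operator need not even be bounded on $H^\sigma$ for $\sigma\leq 0$, and in any case has no reason to map $H^{s+\mu}$ into $H^{s+m}$ when $\mu<m$ (the relevant case, cf.~Lemma \ref{lem:smoothing}). So the estimate you claim, ``each $c_\alpha(x,D)$ \dots maps $H^{s+\mu}$ into $H^{s+m}$,'' is not proved for the terms that actually appear. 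Closing this requires a composition theorem specifically adapted to reduced-order symbols that handles $e(x,D)p(x,D)-q(x,D)$ in one step, which is essentially the content of the $\tilde{S}_{1,1}$ calculus developed in \cite[Section 9.6]{Hormander97}. A secondary point: your claim that $e=q/p$ inherits the reduced order of $p$ ``via the quotient rule'' is plausible but not automatic, since $\tilde{S}_{1,1}$ membership is a Fourier-support condition on the twisted diagonal and is not obviously preserved by division; this would also need verification.
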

\begin{proof}
This is a direct consequence of \cite[Theorem 9.6.7]{Hormander97}, given that $p$ is elliptic in a neighborhood of $(x,\xi)\in\Tp\setminus o$ if and only if $(x,\xi)\notin \Sigma_{\hat{p}}$.
\end{proof}

Finally, the following key proposition on propagation of singularities for $\tilde{S}^{m}_{1,1}$ symbols is essentially a special case of \cite[Theorem 11.3.4]{Hormander97}. 

\begin{proposition}\label{prop:sharpprop}
Let $m,\mu,s\in\R$. Let $p\in \tilde{S}^{m}_{1,1}$ be such that $\partial_{x}^{\beta}p\in \tilde{S}_{1,1}^{m}$ has reduced order $\mu+1$ for all $\beta\in\Z_{+}^{n}$ with $|\beta|=1$, such that $\Imag p\in \tilde{S}^{m-1}_{1,1}$, and such that $\hat{p}$ is a well-defined element of $C^{1}(\Tp\setminus o)$. 
Let $u\in H^{s+\mu}(\Rn)$. Then, for each $(x,\xi)\in \WF^{s+m-1}u\setminus \WF^{s}p(x,D)u$, there exist an interval $I\subseteq \R$ and a null bicharacteristic $\gamma:I\to \Tp\setminus o$ of $\hat{p}$ such that $(x,\xi)\in \gamma(I)\subseteq \WF^{s+m-1}u\setminus \WF^{s}p(x,D)u$ and such that $\gamma$ is maximally extended within $\Tp\setminus (o\cup \WF^{s}p(x,D)u)$. 
\end{proposition}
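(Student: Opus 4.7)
The plan is to reduce the statement directly to \cite[Theorem 11.3.4]{Hormander97} by matching hypotheses, with only minor bookkeeping required. Hörmander's theorem is a propagation-of-singularities statement for paradifferential operators whose symbols lie in $\tilde{S}^{m}_{1,1}$ and satisfy additional reduced-order conditions on derivatives, and its conclusion is phrased exactly in terms of null bicharacteristics of the principal part $\hat{p}$, with an analogous maximality clause. So the bulk of the work is to show that the assumptions here feed into that theorem verbatim.

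First I would unpack the hypotheses: (i) $p\in \tilde{S}^m_{1,1}$ gives the basic $H^\sigma \to H^{\sigma-m}$ mapping via Remark \ref{rem:multiplication}, which is essential for the positive commutator argument to make sense on Sobolev spaces at all orders; (ii) the reduced-order assumption that $\partial_x^\beta p \in \tilde{S}^m_{1,1}$ has reduced order $\mu + 1$ for $|\beta|=1$ is precisely the quantitative commutator-gain hypothesis in \cite[Theorem 11.3.4]{Hormander97}, ensuring that $[p(x,D),q(x,D)]$ acts with appropriate loss when $q$ is a microlocalizer; (iii) $\Imag p \in \tilde{S}^{m-1}_{1,1}$ is the standard principal-type self-adjointness condition needed to move the commutator estimate from $\langle [\Real p(x,D),q(x,D)]u, u\rangle$ to a one-sided Gårding-type bound; and (iv) $\hat{p}\in C^{1}(\Tp\setminus o)$ guarantees that the Hamilton field $H_{\hat{p}}$ is continuous, so that integral curves exist by Peano's theorem as already noted in the introduction, and that the symbol of the commutator $[p(x,D),q(x,D)]$ can be identified as $H_{\hat{p}}q$ to leading order. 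The a priori regularity $u\in H^{s+\mu}$ matches the background Sobolev regularity in Hörmander's statement, with $s+\mu$ being the critical threshold at which the commutator identity becomes meaningful.

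The positive commutator argument itself then proceeds by choosing a family of microlocal cutoffs $q_t\in S^{s+m-1-\mu/2}_{1,0}$ concentrated along a conjectural bicharacteristic segment. Testing $p(x,D)u \in H^s$ against $q_t(x,D)^{*}q_t(x,D)u$, one writes
\[
2\,\Real \langle p(x,D)u, q_t(x,D)^{*}q_t(x,D)u\rangle = \langle (p(x,D)+p(x,D)^{*})q_t u, q_t u\rangle + \langle [q_t^{*}q_t, p(x,D)]u,u\rangle,
\]
and uses the three structural hypotheses above to propagate the $H^{s+m-1}$ wavefront control from one end of the bicharacteristic to the other, while keeping the $\WF^{s}p(x,D)u$ error under control. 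The maximality clause follows by a standard Zorn/exhaustion argument: given a starting point in $\WF^{s+m-1}u \setminus \WF^{s}p(x,D)u$, the collection of bicharacteristic segments through $(x,\xi)$ contained entirely in this set is nonempty by the local statement just described, and is easily seen to be directed under inclusion, so one extracts a maximal element.

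The main obstacle I anticipate is not the commutator estimate, which is entirely within Hörmander's machinery, but rather the non-uniqueness of bicharacteristics in the $C^1$ category: because $H_{\hat{p}}$ need not be Lipschitz, one cannot flow a single curve and must instead invoke a compactness argument on the space of integral curves with prescribed initial data. Concretely, at each step one shows only that \emph{some} bicharacteristic through the current endpoint stays in the wavefront set, and the maximal extension is then built by applying Arzelà--Ascoli to a sequence of approximating curves, which is compatible with the formulation of Proposition \ref{prop:sharpprop}. Once this is in place, the statement follows from \cite[Theorem 11.3.4]{Hormander97} with $\sigma = s+\mu$ and principal symbol $\hat{p}$.
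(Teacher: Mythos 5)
Your overall strategy is right: the proposition is indeed obtained by feeding the hypotheses into \cite[Theorem 11.3.4]{Hormander97}, with $\hat{p}$ as principal symbol and $s+\mu$ as the a priori Sobolev regularity. The long discussion of how the hypotheses (i)--(iv) enter the positive commutator argument is essentially a paraphrase of the interior of H\"ormander's proof; since the paper simply cites that theorem, this material is not needed for the reduction and does not advance the proof of the proposition itself.

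Where your proposal genuinely diverges from the paper, and where it has a gap, is in the treatment of maximality. H\"ormander's theorem produces a bicharacteristic that is maximally extended within the open set where $H_{\hat{p}}\neq 0$ and $(y,\eta)\notin\WF^{s}p(x,D)u$, i.e.\ within the complement of $H_{\hat{p}}^{-1}(\{0\})\cup\WF^{s}p(x,D)u$. The proposition, however, asks for maximality within the larger set $\Tp\setminus(o\cup\WF^{s}p(x,D)u)$, which may contain critical points of $H_{\hat{p}}$. The paper closes this gap with two observations you do not make: first, if $H_{\hat{p}}(x,\xi)=0$ the statement is trivial (the constant curve works); second, if H\"ormander's curve $\gamma$ exits its domain of maximality in finite time by converging to a critical point $(y_{0},\eta_{0})$ of $H_{\hat{p}}$, then that limit point lies in $\WF^{s+m-1}u$ by closedness, and one extends $\gamma$ by the constant curve at $(y_{0},\eta_{0})$, which is again a null bicharacteristic. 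Your Zorn/Arzel\`a--Ascoli argument does not address this issue, and moreover the claim that bicharacteristic segments through $(x,\xi)$ contained in $\WF^{s+m-1}u\setminus\WF^{s}p(x,D)u$ form a directed set under inclusion is not justified: by non-uniqueness, two segments through $(x,\xi)$ that leave in different directions need not admit a common extension, so the poset is not directed and Zorn's lemma would only produce a maximal chain, not a globally maximal curve, without further argument. None of this machinery is needed once you observe that H\"ormander already hands you a maximal curve and that the only remaining obstruction is the trivial-extension step at critical points of the Hamilton field.
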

\begin{proof}
Set 
\[
U:=\{(y,\eta)\in \Tp\setminus o\mid H_{\hat{p}}(y,\eta)\neq 0, (y,\eta)\notin \WF^{s}p(x,D)u\}.
\]
The statement is trivial if $H_{\hat{p}}(x,\xi)=0$. If this is not the case, then $(x,\xi)\in \WF^{s+m-1}u\cap U$, and \cite[Theorem 11.3.4]{Hormander97} yields an interval $I'\subseteq \R$ and a null bicharacteristic $\gamma:I'\to \Tp\setminus o$ of $\hat{p}$, such that $(x,\xi)\in \gamma(I')\subseteq \WF^{s+m-1}u\cap U$ and such that $\gamma$ is maximally extended within the complement of $H_{\hat{p}}^{-1}(\{0\})\cup \WF^{s}p(x,D)u$. Moreover, if $I'\neq \R$ and if $\gamma(t)$ converges to an element in the null set of $H_{\hat{p}}$ as $t$ converges to a finite endpoint of $I'$, then $\gamma$ can be extended in a trivial manner within $\WF^{s+m-1}u$ after reaching this endpoint.
\end{proof}

\section{Propagation of singularities}\label{sec:main}

In this section we will prove our main results on propagation of singularities for rough symbols.

\subsection{Main result}\label{subsec:main}

With the results on paradifferential calculus from Section \ref{subsec:paradifferential} in hand, we can easily prove our main result. 

\begin{theorem}\label{thm:main}
Let $r>1$, $m\in\R$ and 
$-r<s<r$.  
Let $p\in C^{r}_{*}S^{m}_{1,0}$ be such that $\Imag p\in C^{r}_{*}S^{m-1}_{1,0}$, and such that $\hat{p}$ is a well-defined element of $C^{1}(\Tp\setminus o)$. Set
\begin{equation}\label{eq:sigmamain}
\sigma:=\begin{cases}
s-r&\text{if }0<s<r,\\
\veps-r&\text{if }-r<s\leq 0,
\end{cases}
\end{equation}
for $\veps>0$, and let $u\in H^{\sigma+m}(\Rn)$. Then
\begin{equation}\label{eq:WFinclusion}
\WF^{s+m}u\subseteq \Sigma_{\hat{p}}\cup \WF^{s}p(x,D)u.
\end{equation}
Moreover, for each $(x,\xi)\in \WF^{s+m-1}u\setminus \WF^{s}p(x,D)u$, there exist an interval $I\subseteq \R$ and a null bicharacteristic $\gamma:I\to \Tp\setminus o$ of $\hat{p}$ such that $(x,\xi)\in \gamma(I)\subseteq \WF^{s+m-1}u\setminus \WF^{s}p(x,D)u$ and such that $\gamma$ is maximally extended within $\Tp\setminus (o\cup \WF^{s}p(x,D)u)$.

Suppose additionally that 
$p(x,\xi)=\sum_{|\alpha|\leq m}a_{\alpha}(x)\xi^{\alpha}$ for all $(x,\xi)\in\R^{2n}$, where $a_{\alpha}\in \HT^{r,\infty}(\Rn)$ for each $\alpha\in\Z_{+}^{n}$ with $|\alpha|\leq m$, and $a_{\alpha}$ is real-valued if $|\alpha|=m$. Then the statements above hold for all $-r\leq s\leq r$, with $\veps=0$.
\end{theorem}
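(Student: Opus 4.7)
The plan is to decompose $p = p^\sharp + p^\flat$ via Lemma~\ref{lem:smoothing}, absorb $p^\flat(x,D)u$ into the right-hand side of the equation, and then apply Propositions~\ref{prop:inverse} and~\ref{prop:sharpprop} to the cleaner symbol $p^\sharp$. The structural input needed for those propositions is that $p^\sharp \in \tilde{S}^m_{1,1}$ has reduced order $m-r$, which follows from Lemma~\ref{lem:smoothing}: writing $\partial_x^\delta p^\sharp = \partial_x^\gamma (\partial_x^\beta p^\sharp)$ with $|\beta|=1$ and using that $\partial_x^\beta p^\sharp$ has reduced order $m-r+1$ yields $\partial_x^\delta p^\sharp \in \tilde{S}^{m-r+|\delta|}_{1,1}$ for $|\delta|$ large. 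Similarly, $\Imag p^\sharp = (\Imag p)^\sharp \in \tilde{S}^{m-1}_{1,1}$ by linearity and the real-valuedness of the Littlewood--Paley kernel. Finally, $p^\sharp - p = -p^\flat \in C^r_* S^{m-r}_{1,1}$ is of lower homogeneous order in the fiber variable, so $\hat{p^\sharp} = \hat{p}$ and therefore $\Sigma_{\hat{p^\sharp}} = \Sigma_{\hat{p}}$ and $H_{\hat{p^\sharp}} = H_{\hat{p}}$.

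Next I would verify $p^\flat(x,D)u \in H^s(\Rn)$. For $s \in (0,r)$, the first part of Proposition~\ref{prop:pseudoLp} with $\delta=1$ gives $p^\flat(x,D)\colon H^{s+m-r}(\Rn) \to H^s(\Rn)$, which matches the assumption $u \in H^{\sigma+m}(\Rn) = H^{s+m-r}(\Rn)$. For $-r<s\leq 0$ with $u \in H^{\veps+m-r}(\Rn)$, I would apply the same bound at some $s' \in (0,\min(\veps,r))$ to obtain $p^\flat(x,D)u \in H^{s'}(\Rn) \subseteq H^s(\Rn)$. Hence $p^\sharp(x,D)u$ agrees with $p(x,D)u$ modulo $H^s(\Rn)$, so $\WF^s p^\sharp(x,D)u \subseteq \WF^s p(x,D)u$. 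Since $\sigma \geq s-r$ in both cases of \eqref{eq:sigmamain}, the inclusion $u \in H^{\sigma+m}(\Rn) \subseteq H^{s+(m-r)}(\Rn)$ holds, and applying Propositions~\ref{prop:inverse} and~\ref{prop:sharpprop} to $p^\sharp$ with $\mu = m-r$ yields both conclusions.

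For the additional statement, the endpoints $s = \pm r$ with $\veps=0$ must be included, so $u \in H^m(\Rn)$ at $s=r$ and $u \in H^{m-r}(\Rn)$ at $s \in [-r,0]$. By linearity of the smoothing in the symbol variable, $p^\flat(x,D)u = \sum_{|\alpha|\leq m} a_\alpha^\flat(x,D)[D^\alpha u]$, where $a_\alpha^\flat$ is obtained from Lemma~\ref{lem:smoothing} applied to $a_\alpha$ viewed as a symbol of order zero. At $s = r$, one has $D^\alpha u \in H^{m-|\alpha|}(\Rn) \subseteq L^2(\Rn)$, and the endpoint bound of Proposition~\ref{prop:pseudoLp} gives $a_\alpha^\flat(x,D)\colon L^2(\Rn) \to H^r(\Rn)$; at $s \in [-r,0]$, $D^\alpha u \in H^{m-r-|\alpha|}(\Rn) \subseteq H^{-r}(\Rn)$ and $a_\alpha^\flat(x,D)\colon H^{-r}(\Rn) \to L^2(\Rn) \subseteq H^s(\Rn)$. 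With $p^\flat(x,D)u \in H^s(\Rn)$ secured at the endpoints, the rest of the argument proceeds as before, using also that $\Imag p \in \HT^{r,\infty} S^{m-1}_{1,0} \subseteq C^r_* S^{m-1}_{1,0}$ thanks to the real-valuedness of the top-order coefficients.

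The main obstacle I anticipate is the delicate endpoint bookkeeping in the additional statement, where different terms in $p^\flat$ must be handled with different endpoint bounds and every $D^\alpha u$ must land in the correct input Sobolev space for $a_\alpha^\flat(x,D)$; this is where the hypothesis $|\alpha| \leq m$ becomes crucial. A secondary point requiring care is the identification $\hat{p^\sharp} = \hat{p}$ as $C^1$ functions on $\Tp \setminus o$, which hinges on control of $p^\sharp - p$ beyond the bare symbol estimates.
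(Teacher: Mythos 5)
Your proposal is correct and takes essentially the same approach as the paper: decompose $p=p^{\sharp}+p^{\flat}$ via Lemma~\ref{lem:smoothing}, show $p^{\flat}(x,D)u\in H^{s}(\Rn)$ via Proposition~\ref{prop:pseudoLp} so that $\WF^{s}p^{\sharp}(x,D)u=\WF^{s}p(x,D)u$ and $\hat{p^{\sharp}}=\hat{p}$, and then apply Propositions~\ref{prop:inverse} and~\ref{prop:sharpprop} to $p^{\sharp}$ with $\mu=m-r$. You helpfully make explicit a few points the paper leaves terse, namely deriving the reduced order of $p^{\sharp}$ itself from that of $\partial_{x}^{\beta}p^{\sharp}$, the case split in $s$ for bounding $p^{\flat}(x,D)u$, and the expansion $p^{\flat}(x,D)u=\sum_{\alpha}a_{\alpha}^{\flat}(x,D)D^{\alpha}u$ for the $\HT^{r,\infty}$ endpoint version.
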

\begin{proof}
First note that $p(x,D)u\in H^{\sigma}(\Rn)\cup L^{2}(\Rn)$ is well defined, by Proposition \ref{prop:pseudoLp} and Remark \ref{rem:multiplication}.

Write $p=p^{\sharp}+p^{\flat}$, as in \eqref{eq:flat}. Then, by Lemma \ref{lem:smoothing}, $p^{\sharp}\in \tilde{S}^{m}_{1,1}$ is such that, for all $\beta\in\Z_{+}^{n}$ with $|\beta|=1$, the symbol $\partial_{x}^{\beta}p^{\sharp}\in\tilde{S}^{m}_{1,1}$ has reduced order $m-r+1$, and as such also reduced order $\mu+1$ for any $\mu\geq m-r$. 
Moreover, $\Imag p^{\sharp}=(\Imag p)^{\sharp}\in \tilde{S}^{m-1}_{1,1}$ and $p^{\flat}\in C^{r}_{*}S^{m- r}_{1,1}$. In particular, since $p^{\flat}$ has lower differential order,
\begin{equation}\label{eq:keylimit}
\lim_{\la\to\infty}\la^{-m}p^{\sharp}(x,\la\xi)=\lim_{\la\to\infty}\la^{-m}p(x,\la\xi)-\lim_{\la\to\infty}\la^{-m}p^{\flat}(x,\la\xi)=\hat{p}(x,\xi)
\end{equation}
for all $(x,\xi)\in\Tp\setminus o$. Also, 
\begin{equation}\label{eq:keyWF}
\WF^{s}p^{\sharp}(x,D)u=\WF^{s}p(x,D)u,
\end{equation}
as follows by relying on Proposition \ref{prop:pseudoLp} to see that $p^{\flat}(x,D)u\in H^{s}(\Rn)$. 

The required statements are now a consequence of Propositions \ref{prop:inverse} and \ref{prop:sharpprop}, applied to $p^{\sharp}$ 
and in combination with \eqref{eq:keylimit} and \eqref{eq:keyWF}.  
\end{proof}

\begin{remark}\label{rem:lowerorder}
One can also add rough lower-order perturbations in Theorem \ref{thm:main}, 
by analyzing the proof. For example, let $r>1$ and suppose that $p=p_{0}+p_{1}$ for $p_{0}\in C^{r}_{*}S^{m}_{1,0}$ and $p_{1}\in C^{r-1}_{*}S^{m-1}_{1,0}$ such that $\Imag p_{0}\in C^{r}_{*}S^{m-1}_{1,0}$, and such that $\hat{p_{0}}$ is a well-defined element of $C^{1}(\Tp\setminus o)$. Then $p^{\sharp}$ still satisfies the conditions of Propositions \ref{prop:inverse} and \ref{prop:sharpprop} with $\mu=m-r$, and $p^{\flat}(x,D):H^{\sigma+m}(\Rn)\to H^{s}(\Rn)$ is bounded for all $-(r-1)<s<r-1$. Hence the conclusion of Theorem \ref{thm:main} still holds, albeit for a smaller range of $s$. Of course, the endpoints $s=-(r-1)$ and $s=r-1$ of this Sobolev interval can again be included under conditions as in Theorem \ref{thm:main}, although these are now only required of $p_{1}$. 
If one wants to allow $\veps=0$ in \eqref{eq:sigmamain} for $s\leq 0$, then additional assumptions should also be made on $p_{0}$. 
\end{remark}

\begin{remark}\label{rem:othersymbols}
Theorem \ref{thm:main} is stated for symbols in $C^{r}_{*}S^{m}_{1,0}$, but one can obtain an analogous result for $C^{r}_{*}S^{m}_{1,\theta}$ symbols with $\theta\in(0,1)$. To do so, it suffices to modify Lemma \ref{lem:smoothing}. Namely, for $p\in C^{r}_{*}S^{m}_{1,\theta}$, 
the term $p^{\sharp}$ has the same properties as in Lemma \ref{lem:smoothing}, while $p^{\flat}\in C^{r}_{*}S^{m-(1-\theta)r}_{1,1}$. Hence one can repeat the proof of Theorem \ref{thm:main} and obtain an extension of that result, with $\sigma$ replaced by
\[
\sigma_{\theta}:=\begin{cases}
s-(1-\theta)r&\text{if }0<s<r,\\
\veps-(1-\theta)r&\text{if }-r<s\leq 0.
\end{cases}
\]
\end{remark}

\begin{remark}\label{rem:C11}
If $\hat{p}\in C^{1,1}(\Tp\setminus o)$, then integral curves of $H_{\hat{p}}$ are unique.
In fact, by Osgood's Theorem (see \cite[Proposition 1.A.2]{Taylor23a}), slightly less than $C^{1,1}$ regularity guarantees uniqueness. For example, $\hat{p}\in C^{2}_{*,\loc}(\Tp\setminus o)$ suffices (see \cite[Section 3.11]{Taylor00}). 
\end{remark}

The statement on propagation of singularities in Theorem \ref{thm:main} is equivalent to the following statement on propagation of regularity.

\begin{corollary}\label{cor:propreg1} 
Let $r>1$, $m\in\R$ and 
$-r< s<r$. 
Let $p\in C^{r}_{*}S^{m}_{1,0}$ be 
such that $\Imag p\in C^{r}_{*}S^{m-1}_{1,0}$, and such that $\hat{p}$ is a well-defined element of $C^{1}(\Tp\setminus o)$.  
Let $\sigma$ be as in \eqref{eq:sigmamain} for $\veps>0$, and let $u\in H^{\sigma+m}(\Rn)$. Let $\Gamma_{1},\Gamma_{2}\subseteq \Tp\setminus (o\cup \WF^{s}p(x,D)u)$ be conic subsets, and suppose that every null bicharacteristic of $\hat{p}$ which passes through $\Gamma_{1}$, and which is maximally extended within $\Tp\setminus (o\cup \WF^{s}p(x,D)u)$, also passes through $\Gamma_{2}$. Then, if $\Gamma_{2}\cap \WF^{s+m-1}u=\emptyset$, also $\Gamma_{1}\cap \WF^{s+m-1}u=\emptyset$.

Suppose additionally that $p(x,\xi)=\sum_{|\alpha|\leq m}a_{\alpha}(x)\xi^{\alpha}$ for all $(x,\xi)\in\R^{2n}$, where $a_{\alpha}\in \HT^{r,\infty}(\Rn)$ for each $\alpha\in\Z_{+}^{n}$ with $|\alpha|\leq m$, and $a_{\alpha}$ is real-valued if $|\alpha|=m$. Then the statement above holds for all $-r\leq s\leq r$, with $\veps=0$.
\end{corollary}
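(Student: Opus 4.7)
The plan is to derive this corollary as a direct contrapositive of the propagation-of-singularities statement in Theorem \ref{thm:main}. Suppose for contradiction that $\Gamma_{2}\cap\WF^{s+m-1}u=\emptyset$ but there exists a point $(x,\xi)\in\Gamma_{1}\cap \WF^{s+m-1}u$. Since $\Gamma_{1}\subseteq \Tp\setminus(o\cup\WF^{s}p(x,D)u)$, the point $(x,\xi)$ lies in $\WF^{s+m-1}u\setminus \WF^{s}p(x,D)u$, so the second part of Theorem \ref{thm:main} applies and produces an interval $I\subseteq\R$ together with a null bicharacteristic $\gamma:I\to\Tp\setminus o$ of $\hat{p}$ satisfying $(x,\xi)\in\gamma(I)\subseteq \WF^{s+m-1}u\setminus\WF^{s}p(x,D)u$, maximally extended within $\Tp\setminus(o\cup\WF^{s}p(x,D)u)$.

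Because $\gamma$ passes through $(x,\xi)\in\Gamma_{1}$ and is maximally extended within the complement of $o\cup\WF^{s}p(x,D)u$, the hypothesis forces the existence of some $t\in I$ with $\gamma(t)\in\Gamma_{2}$. But then $\gamma(t)\in\WF^{s+m-1}u$ by the containment $\gamma(I)\subseteq\WF^{s+m-1}u$, contradicting $\Gamma_{2}\cap\WF^{s+m-1}u=\emptyset$. Hence no such $(x,\xi)$ can exist, which yields $\Gamma_{1}\cap\WF^{s+m-1}u=\emptyset$.

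For the additional statement, I would simply invoke the corresponding endpoint extension of Theorem \ref{thm:main}, which under the hypothesis that $p$ is a differential operator with $\HT^{r,\infty}$ coefficients (with real-valued top-order symbol) allows the full closed interval $-r\leq s\leq r$ and $\veps=0$. The preceding contradiction argument is completely insensitive to these modifications, so it transfers verbatim.

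Since the entire argument is a one-line logical inversion of Theorem \ref{thm:main}, there is no genuine obstacle to overcome; the only mild subtlety lies in ensuring that the maximality clause in Theorem \ref{thm:main} matches the hypothesis phrased in terms of bicharacteristics ``maximally extended within $\Tp\setminus(o\cup\WF^{s}p(x,D)u)$,'' so that the bicharacteristic produced by the theorem is indeed of the type to which the assumption on $\Gamma_{1},\Gamma_{2}$ applies. This matches by construction, and no further analysis is required.
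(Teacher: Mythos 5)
Your proof is correct and matches the paper's intent exactly: the paper states the corollary without a written proof, describing it as ``equivalent'' to the propagation statement in Theorem \ref{thm:main}, and your contrapositive argument is precisely that equivalence made explicit. The key point you correctly identify is that the maximality clause in Theorem \ref{thm:main} is phrased so as to match the hypothesis on $\Gamma_1,\Gamma_2$ verbatim, so the bicharacteristic produced by the theorem is of the type the hypothesis quantifies over.
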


With a bit of extra work, Corollary \ref{cor:propreg2} can be quantified, in the following manner. 

\begin{corollary}\label{cor:propreg2}
Let $r>1$, $m\in\R$ and 
$-r< s<r$. 
Let $p\in C^{r}_{*}S^{m}_{1,0}$ be 
such that $\Imag p\in C^{r}_{*}S^{m-1}_{1,0}$, and such that $\hat{p}$ is a well-defined element of $C^{1}(\Tp\setminus o)$.  
Let $\sigma$ be as in \eqref{eq:sigmamain} for $\veps>0$. Let $\Gamma_{1},\Gamma_{2},\Gamma_{2}\subseteq \Tp\setminus o$ and $q_{1},q_{2},q_{3}\in S^{0}_{1,0}$ be such that the following properties hold:
\begin{itemize}
\item $\Gamma_{1},\Gamma_{2},\Gamma_{3}$ are conic, and $\Gamma_{1},\Gamma_{2}\subseteq \Gamma_{3}$;
\item every null bicharacteristic of $\hat{p}$ which passes through $\Gamma_{1}$, and which is maximally extended within $\Gamma_{3}$, also passes through $\Gamma_{2}$;
\item $\supp(q_{1})\subseteq \Gamma_{1}$, $q_{2}$ is elliptic on $\Gamma_{2}$, and $q_{3}$ is elliptic on $\Gamma_{3}$.
\end{itemize}
Write $Q_{j}:=q_{j}(x,D)$ for $1\leq j\leq 3$. Then there exists a $C\geq0$ such that $Q_{1}u\in H^{s+m-1}(\Rn)$ and
\[
\|Q_{1}u\|_{H^{s+m-1}(\Rn)}\leq C\big(\|Q_{2}u\|_{H^{s+m-1}(\Rn)}+\|Q_{3}p(x,D)u\|_{H^{s}(\Rn)}+\|u\|_{H^{\sigma+m}(\Rn)}\big),
\]
for all $u\in H^{\sigma+m}(\Rn)$ such that $Q_{2}u\in H^{s+m-1}(\Rn)$ and $Q_{3}p(x,D)u\in H^{s}(\Rn)$. 

Suppose additionally that $p(x,\xi)=\sum_{|\alpha|\leq m}a_{\alpha}(x)\xi^{\alpha}$ for all $(x,\xi)\in\R^{2n}$, where $a_{\alpha}\in \HT^{r,\infty}(\Rn)$ for each $\alpha\in\Z_{+}^{n}$ with $|\alpha|\leq m$, and $a_{\alpha}$ is real-valued if $|\alpha|=m$. Then the statement above holds for all $-r\leq s\leq r$, with $\veps=0$.
\end{corollary}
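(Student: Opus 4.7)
The plan is to deduce the quantitative estimate from the qualitative statement in Corollary \ref{cor:propreg1} via the closed graph theorem, so that essentially no new analysis is required beyond what is already in Theorem \ref{thm:main}.

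I would first introduce the Banach space
\[
X := \{u \in H^{\sigma+m}(\Rn) \mid Q_2 u \in H^{s+m-1}(\Rn),\ Q_3 p(x,D) u \in H^{s}(\Rn)\},
\]
equipped with the norm $\|u\|_X := \|u\|_{H^{\sigma+m}(\Rn)} + \|Q_2 u\|_{H^{s+m-1}(\Rn)} + \|Q_3 p(x,D) u\|_{H^{s}(\Rn)}$. Completeness of $X$ is straightforward: for a Cauchy sequence $(u_n)$ in $X$, the three pieces converge in their respective Sobolev spaces, and continuity of $Q_2$ and of $Q_3 p(x,D)$ as operators on $H^{\sigma+m}(\Rn)$, ensured by Proposition \ref{prop:pseudoLp} and Remark \ref{rem:multiplication} together with $q_2,q_3 \in S^0_{1,0}$, forces the three limits to agree with $u$, $Q_2 u$ and $Q_3 p(x,D) u$ respectively, after comparison in $\Sw'(\Rn)$.

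Next, I would check that the linear operator $T : X \to H^{s+m-1}(\Rn)$, $Tu := Q_1 u$, is well defined. Since $q_3$ is elliptic on $\Gamma_3$, ellipticity yields $\Gamma_3 \cap \WF^{s} p(x,D) u = \emptyset$, and likewise $\Gamma_2 \cap \WF^{s+m-1} u = \emptyset$. Now suppose for contradiction that $(x,\xi) \in \Gamma_1 \cap \WF^{s+m-1} u$; Theorem \ref{thm:main} produces a null bicharacteristic $\gamma$ of $\hat{p}$ through $(x,\xi)$ which is contained in $\WF^{s+m-1} u$ and maximally extended within $\Tp \setminus (o \cup \WF^{s} p(x,D) u)$. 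Restricting $\gamma$ to the connected component of $\gamma^{-1}(\Gamma_3)$ containing the parameter value at which $\gamma$ passes through $(x,\xi)$ yields a null bicharacteristic through $\Gamma_1$ that is maximally extended within $\Gamma_3$; by hypothesis it meets $\Gamma_2$, contradicting containment in $\WF^{s+m-1} u$. Since $\supp q_1 \subseteq \Gamma_1$, it follows that $Q_1 u \in H^{s+m-1}(\Rn)$. Closedness of the graph of $T$ is then immediate: if $u_n \to u$ in $X$ and $Q_1 u_n \to v$ in $H^{s+m-1}(\Rn)$, continuity of $Q_1$ on $H^{\sigma+m}(\Rn)$ gives $Q_1 u_n \to Q_1 u$ in that space, and comparison in $\Sw'(\Rn)$ forces $v = Q_1 u$. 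The closed graph theorem then produces the constant $C$. The endpoint case $s \in \{-r,r\}$ with $\veps = 0$ under the additional hypothesis on $p$ is handled identically, invoking the corresponding endpoint statement of Corollary \ref{cor:propreg1}.

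The main obstacle I anticipate is the truncation step in the third paragraph: one must verify that restricting a null bicharacteristic that is maximal within the large set $\Tp \setminus (o \cup \WF^{s} p(x,D) u)$ to its first exit from $\Gamma_3$ genuinely yields a bicharacteristic that is maximal within $\Gamma_3$, and that it still passes through the original point in $\Gamma_1$. This is essentially a point-set argument about connected components of preimages, but it requires some care because non-uniqueness of integral curves under mere $C^{1}$ regularity of $\hat{p}$ prevents one from canonically speaking of ``the'' bicharacteristic through a point.
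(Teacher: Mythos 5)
Your proposal is correct and follows essentially the same strategy as the paper: define the Banach space $X$ with the graph norm, establish well-definedness of $Q_1 : X \to H^{s+m-1}(\Rn)$ by the propagation result, and invoke the closed graph theorem. The only cosmetic difference is that the paper cites Corollary~\ref{cor:propreg1} for the vanishing $\Gamma_1\cap\WF^{s+m-1}u=\emptyset$, whereas you unpack it directly from Theorem~\ref{thm:main} (and in doing so flag the bicharacteristic-truncation subtlety that the paper leaves implicit in the ``equivalence'' between the two corollaries), and the paper phrases completeness of $X$ via closedness of $Q_2$ and $Q_3p(x,D)$ as unbounded operators while you argue it via their boundedness into spaces of lower regularity; these are two ways of saying the same thing.
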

\begin{proof}
First note that $Q_{2}$ and $Q_{3}p(x,D)$ are closed unbounded operators from $H^{\sigma+m}(\Rn)$ to $H^{s+m-1}(\Rn)$ and from $H^{\sigma+m}(\Rn)$ to $H^{s}(\Rn)$, respectively. For $Q_{2}$ this follows from the fact that $Q_{2}^{*}:\Sw(\Rn)\to \Sw(\Rn)$ continuously, while for $Q_{3}p(x,D)$ one can use that $Q_{3}^{*}:H^{-\sigma}(\Rn)\to H^{-\sigma}(\Rn)$ and that $p(x,D):H^{\sigma+m}(\Rn)\to H^{\sigma}(\Rn)$, by Proposition \ref{prop:pseudoLp}.

Now set 
\[
X:=\{u\in H^{\sigma+m}(\Rn)\mid Q_{2}u\in H^{s+m-1}(\Rn), Q_{3}p(x,D)u\in H^{s}(\Rn)\}
\]
and, for $u\in X$,
\[
\|u\|_{X}:=\|Q_{2}u\|_{H^{s+m-1}(\Rn)}+\|Q_{3}p(x,D)u\|_{H^{s}(\Rn)}+\|u\|_{H^{\sigma+m}(\Rn)}.
\]
By the previous observations, $X$ is a Banach space. Moreover, 
for every $u\in X$, by assumption and by standard pseudodifferential calculus, $\Gamma_{2}\cap \WF^{s+m-1}u=\emptyset$ and $\Gamma_{3}\cap \WF^{s}p(x,D)u=\emptyset$. Hence, by Corollary \ref{cor:propreg1}, $\Gamma_{1}\cap \WF^{s+m-1}u=\emptyset$ and thus $Q_{1}u\in H^{s+m-1}(\Rn)$. Again, $Q_{1}:X\to H^{s+m-1}(\Rn)$ is a closed operator, and the closed graph theorem therefore concludes the proof. 
\end{proof}

\begin{remark}\label{rem:constants}
Typically, one obtains a qualitative statement as in Corollary \ref{cor:propreg1} as a consequence of a quantitative statement as in Corollary \ref{cor:propreg2}. However, to prove Theorem \ref{thm:main}, and thus Corollary \ref{cor:propreg1}, we relied on the qualitative statement in Proposition \ref{prop:sharpprop}. The latter is in turn obtained in \cite{Hormander97} as a consequence of \cite[Lemma 11.3]{Hormander97} and an additional argument. Whereas \cite[Lemma 11.3]{Hormander97} is proved using the positive commutator method, and as such can be quantified in the same manner as usual, the additional argument which yields Proposition \ref{prop:sharpprop} is more qualitative. Instead of expecting the reader to quantify the proof of Proposition \ref{prop:sharpprop} in \cite{Hormander97}, we have simply included an abstract proof of the quantitative bounds.
\end{remark}

\subsection{Auxiliary results}\label{subsec:auxiliary}

Theorem \ref{thm:main} is formulated for operators defined over the full space $\Rn$, whereas for applications to e.g.~manifolds one is often interested in equations on domains $\Omega\subseteq \Rn$. We will not delve into subtleties regarding rough pseudodifferential operators on domains or manifolds, but we do include a corollary of Theorem \ref{thm:main} concerning differential operators on domains. 

\begin{corollary}\label{cor:domains}
Let $r>1$, $m\in\Z_{+}$ and 
$-r<s<r$.  
Let $\Omega\subseteq \Rn$ be open. For each $\alpha\in\Z_{+}^{n}$ with $|\alpha|\leq m$, let $a_{\alpha}\in C^{r}_{*,\loc}(\Omega)$ be such that $a_{\alpha}$ is real-valued if $|\alpha|=m$. Set $p(x,\xi):=\sum_{|\alpha|\leq m}a_{\alpha}(x)\xi^{\alpha}$ and $p_{m}(x,\xi):=\sum_{|\alpha|=m}a_{\alpha}(x)\xi^{\alpha}$, for $(x,\xi)\in T^{*}\Omega$. Let $\sigma$ be as in \eqref{eq:sigmamain} for $\veps>0$, and let $u\in H^{\sigma+m}_{\loc}(\Omega)$. Then
\begin{equation}\label{eq:WFinclusiondomain}
\WF^{s+m}u\subseteq \Sigma_{p_{m}}\cup \WF^{s}p(x,D)u.
\end{equation}
Moreover, for each $(x,\xi)\in \WF^{s+m-1}u\setminus \WF^{s}p(x,D)u$, there exist an interval $I\subseteq \R$ and a null bicharacteristic $\gamma:I\to T^{*}\Omega\setminus o$ of $p_{m}$ such that $(x,\xi)\in \gamma(I)\subseteq \WF^{s+m-1}u\setminus \WF^{s}p(x,D)u$ and such that $\gamma$ is maximally extended within $T^{*}\Omega\setminus (o\cup \WF^{s}p(x,D)u)$.

Suppose additionally that $(a_{\alpha})_{|\alpha|\leq m}\subseteq \HT^{r,\infty}_{\loc}(\Rn)$. Then the statements above hold for all $-r\leq s\leq r$, with $\veps=0$.
\end{corollary}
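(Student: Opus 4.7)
My plan is to reduce Corollary \ref{cor:domains} to Theorem \ref{thm:main} by microlocalization. First, fix $x_0 \in \Omega$ and a relatively compact open neighborhood $V$ of $x_0$ in $\Omega$, and choose cutoffs $\chi, \chi_1 \in C^\infty_c(\Omega)$ with $\chi \equiv 1$ on a neighborhood of $\overline{V}$ and $\chi_1 \equiv 1$ on a neighborhood of $\supp \chi$. Define extended coefficients $\tilde a_\alpha := \chi_1 a_\alpha$ (extended by zero outside $\Omega$) and set $\tilde p(x,\xi) := \sum_{|\alpha| \leq m} \tilde a_\alpha(x) \xi^\alpha$. Since $a_\alpha \in C^r_{*,\loc}(\Omega)$ and $\chi_1 \in C^\infty_c(\Omega)$, the product $\tilde a_\alpha$ lies in $C^r_*(\Rn)$, and is real-valued whenever $a_\alpha$ is; consequently $\tilde p \in C^r_* S^m_{1,0}$ with $\Imag \tilde p \in C^r_* S^{m-1}_{1,0}$. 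Because $r > 1$ implies $C^r_*(\Rn) \subseteq C^1(\Rn)$, the symbol $\widehat{\tilde p} = \tilde p_m$ is a well-defined element of $C^1(T^*\Rn \setminus o)$, so $\tilde p$ meets all hypotheses of Theorem \ref{thm:main}. In the additional case, $\tilde a_\alpha \in \HT^{r,\infty}(\Rn)$ and the corresponding endpoint version of Theorem \ref{thm:main} applies.

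The key locality observation is that, because $p$ and $\tilde p$ are differential operators and $\chi \equiv 1$, $\tilde a_\alpha = a_\alpha$ in a neighborhood of every $x \in V$, a direct expansion of $D^\alpha(\chi u)(x)$ yields $\tilde p(x,D)(\chi u)(x) = p(x,D)u(x)$ for all $x \in V$. Combined with $\chi u = u$ on $V$, this gives, over the conic set $V \times (\Rn \setminus \{0\})$, the microlocal identifications $\WF^t(\chi u) = \WF^t u$, $\WF^s \tilde p(x,D)(\chi u) = \WF^s p(x,D) u$, and $\Sigma_{\widehat{\tilde p}} = \Sigma_{p_m}$. Since $\chi u \in H^{\sigma+m}(\Rn)$ by the local regularity of $u$, applying Theorem \ref{thm:main} to $\tilde p$ and $\chi u$ and restricting to $V \times (\Rn \setminus \{0\})$ yields \eqref{eq:WFinclusiondomain} over $V$; covering $\Omega$ by such neighborhoods $V$ then yields the full inclusion.

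For the bicharacteristic statement, fix $(x_0, \xi_0) \in \WF^{s+m-1} u \setminus \WF^s p(x,D) u$ and apply the construction above. Theorem \ref{thm:main} applied to $\tilde p$ and $\chi u$ produces a null bicharacteristic of $\widehat{\tilde p}$ through $(x_0, \xi_0)$ whose restriction to the time interval on which it stays over $V$ is a null bicharacteristic of $p_m$ contained in $\WF^{s+m-1} u \setminus \WF^s p(x,D) u$. To obtain maximality within $T^*\Omega \setminus (o \cup \WF^s p(x,D) u)$, I would apply Zorn's lemma to the family of all null bicharacteristics of $p_m$ on open intervals containing $0$, sending $0$ to $(x_0, \xi_0)$, and contained in $\WF^{s+m-1} u \setminus \WF^s p(x,D) u$, ordered by extension. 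Nonemptiness comes from the local bicharacteristic just constructed, and the union of any totally ordered chain is again in this family because $\WF^{s+m-1} u$ is closed. A maximal element $\gamma : I \to T^*\Omega \setminus o$ is then maximally extended in $T^*\Omega \setminus (o \cup \WF^s p(x,D) u)$: otherwise, a finite endpoint $b$ of $I$ would admit a limit $(y_1, \eta_1) \in T^*\Omega \setminus (o \cup \WF^s p(x,D) u)$, and reapplying the local argument at $(y_1, \eta_1)$ would produce a bicharacteristic through $(y_1, \eta_1)$ in $\WF^{s+m-1} u \setminus \WF^s p(x,D) u$ that can be glued to $\gamma$, contradicting maximality.

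The main obstacle will be this final gluing step. Since $H_{p_m}$ is only continuous, uniqueness of integral curves fails, and I would need to verify that the piecewise-constructed curve is genuinely a bicharacteristic. This works because any piecewise-$C^1$ concatenation of solutions to the Hamiltonian ODE that agrees at the junction is itself a $C^1$ solution, as $H_{p_m}$ takes a single well-defined value there. The existence of the limit at $b$ relies on $H_{p_m}$ being bounded on compact subsets of $T^*\Omega \setminus o$, which makes $\gamma$ uniformly continuous on any such subset; to keep $\gamma$ inside such a compact subset one can exploit the homogeneity of $p_m$ in $\xi$ by reducing to the cosphere bundle.
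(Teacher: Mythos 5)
Your proposal is correct and follows essentially the same strategy as the paper's proof: localize $p$ and $u$ via smooth cutoffs, apply Theorem \ref{thm:main} to the globally defined truncations, and obtain the maximal null bicharacteristic by combining local existence with closedness of $\WF^{s+m-1}u\setminus\WF^{s}p(x,D)u$ in $T^{*}\Omega\setminus(o\cup\WF^{s}p(x,D)u)$. The only differences are cosmetic (you cut off the coefficients by the larger cutoff $\chi_{1}$ and $u$ by the smaller cutoff $\chi$, while the paper uses the reverse nesting and multiplies the whole symbol by a cutoff), and you spell out the Zorn's lemma and $C^{1}$-gluing details for the maximal extension step that the paper leaves implicit with ``this suffices.''
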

\begin{proof}
First note that $p(x,D)u\in H^{\sigma}_{\loc}(\Omega)\cup L^{2}_{\loc}(\Rn)$ is well defined. Indeed, given $x\in \Omega$ and a real-valued $\psi\in C^{\infty}_{c}(\Omega)$ satisfying $\psi\equiv 1$ in a neighborhood of $x$, let $\ph\in C^{\infty}_{c}(\Omega)$ be real-valued and such that $\ph\equiv 1$ on $\supp(\psi)$. Then
\begin{equation}\label{eq:localidentity}
p(y,D)u(y)=\psi(y)p(y,D)u(y)=\psi(y)p(y,D)(\ph u)(y) 
\end{equation}
for all $y\in \Omega$ near $x$. Hence Proposition \ref{prop:pseudoLp} and Remark \ref{rem:multiplication}, applied to $\psi p\in C^{r}_{*}S^{m}_{1,0}$ and $\ph u\in H^{\sigma+m}(\Rn)$, imply that $\psi p(x,D)u\in H^{\sigma}(\Rn)\cup L^{2}_{\loc}(\Rn)$.

Next, observe that $\hat{p}=p_{m}$, 
and similarly for $\psi p$. 
Hence, for \eqref{eq:WFinclusiondomain} 
one can again apply 
 \eqref{eq:localidentity}, this time in combination with Theorem \ref{thm:main}.  
Indeed, if $(x,\xi)\notin \Sigma_{p_{m}}\cup \WF^{s}p(x,D)u$ then $(x,\xi)\notin \Sigma_{\psi p_{m}}\cup \WF^{s}\psi p(x,D)(\ph u)$, and \eqref{eq:WFinclusion} implies that $(x,\xi)\in \WF^{s+m}\ph u$. Since $\ph\equiv 1$ near $x$, one has $(x,\xi)\in \WF^{s+m}u$.

The second statement follows in a similar manner. By Theorem \ref{thm:main}, for any $(x,\xi)\in \WF^{s+m-1}u\setminus \WF^{s}p(x,D)u$, there exists an open interval $I_{0}\subseteq\R$ and a 
null bicharacteristic $\gamma_{0}:I\to \Tp\setminus 0$ of $\psi p_{m}$ such that $(x,\xi)\in \gamma_{0}(I_{0})\subseteq \WF^{s+m-1}\ph u\setminus \WF^{s}\psi p(x,D)(\ph u)$.  
Upon possibly shrinking $I_{0}$, one then  
in fact has $\gamma_{0}(I_{0})\subseteq \WF^{s+m-1}u\setminus \WF^{s}p(x,D)u$, and $\gamma_{0}$ is a null bicharacteristic of $p_{m}$. 
Since $\WF^{s+m-1}u\setminus \WF^{s}p(x,D)u$ is closed in $T^{*}\Omega\setminus (o\cup \WF^{s}p(x,D)u)$, this suffices.
\end{proof}

\begin{remark}\label{rem:lowerorderdomains}
As in Remark \ref{rem:lowerorder},  one can include lower-order terms of lower regularity in Corollary \ref{cor:domains}, 
and in some cases these will only affect the size of the Sobolev interval for $s$. For example, with notation as in Corollary \ref{cor:domains}, suppose that 
$(a_{\alpha})_{|\alpha|=m}\subseteq C^{r}_{*,\loc}(\Omega)$  
and  
$(a_{\alpha})_{|\alpha|<m}\subseteq C^{r-1}_{*,\loc}(\Omega)$. Then the conclusion of Corollary \ref{cor:domains} holds for all $-(r-1)<s<r-1$. 
Moreover, if $(a_{\alpha})_{|\alpha|<m}\subseteq \HT^{r-1,\infty}_{\loc}(\Omega)$, then one may include $s=-(r-1)$ and $s=r-1$, and one may let $\veps=0$ in \eqref{eq:sigmamain} if also $(a_{\alpha})_{|\alpha|=m}\subseteq \HT^{r,\infty}_{\loc}(\Omega)$. 
\end{remark}

Finally, we include a result for second-order operators in divergence form. Such operators can be dealt with using Corollary \ref{cor:domains} and Remark \ref{rem:lowerorderdomains}, but doing so would lead to a smaller Sobolev interval than we will now obtain.

\begin{theorem}\label{thm:divform}
Let $r>1$ and 
$-r-1<s<r-1$. 
Let $\Omega\subseteq \Rn$ be open, let $(a_{ij})_{i,j=1}^{n}\subseteq C^{r}_{*,\loc}(\Omega)$ be real-valued, and set $P:=-\sum_{i,j=1}^{n}\partial_{x_{i}}a_{ij}\partial_{x_{j}}$ and $p(x,\xi):=\sum_{i,j=1}^{n}a_{ij}(x)\xi_{i}\xi_{j}$ for $(x,\xi)\in T^{*}\Omega$. Set
\begin{equation}\label{eq:tau}
\tau:=\begin{cases}
s-r&\text{if }-1<s<r-1,\\
-1+\veps-r&\text{if }-r-1<s\leq -1,
\end{cases}
\end{equation}
for $\veps>0$, and let $u\in H_{\loc}^{\tau+2}(\Omega)$. Then
\[
\WF^{s+2}u\subseteq \Sigma_{p}\cup \WF^{s}Pu.
\]
Moreover, for each $(x,\xi)\in \WF^{s+1}u\setminus \WF^{s}Pu$, there exist an interval $I\subseteq \R$ and a null bicharacteristic $\gamma:I\to T^{*}\Omega\setminus o$ of $p$ such that $(x,\xi)\in \gamma(I)\subseteq \WF^{s+m-1}u\setminus \WF^{s}Pu$ and such that $\gamma$ is maximally extended within $T^{*}\Omega\setminus (o\cup \WF^{s}Pu)$.

Suppose additionally that $(a_{ij})_{i,j=1}^{n}\subseteq \HT^{r,\infty}(\Rn)$. Then the statements above
hold for all $-r-1\leq s\leq r-1$, with $\veps=0$. 
\end{theorem}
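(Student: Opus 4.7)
The plan is to follow the strategy behind Theorem \ref{thm:main}, exploiting the divergence form structure to absorb the subprincipal term into a single symbol of order two rather than treating it as an independent $C^{r-1}_{*}S^{1}_{1,0}$ perturbation, which would shrink the Sobolev interval as in Remark \ref{rem:lowerorderdomains}. After the standard cutoff reduction to the global setting with $a_{ij}\in C^{r}_{*}(\Rn)$ (or $\HT^{r,\infty}(\Rn)$) and $u\in H^{\tau+2}(\Rn)$, I would smooth each coefficient via the splitting \eqref{eq:flat} applied to $a_{ij}(x)$ viewed as a zero-order symbol, writing $a_{ij}=a_{ij}^{\sharp}(x,D)+a_{ij}^{\flat}(x,D)$, and use the exact composition identity
\[
P=\sum_{i,j}D_{i}\,a_{ij}\,D_{j}=c(x,D)+R,\qquad R:=\sum_{i,j}D_{i}\,a_{ij}^{\flat}(x,D)\,D_{j},
\]
where $c(x,\xi)=p^{\sharp}(x,\xi)-i\sum\xi_{j}\partial_{x_{i}}a_{ij}^{\sharp}(x,\xi)$ with $p(x,\xi)=\sum a_{ij}(x)\xi_{i}\xi_{j}$.

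Next I would verify that $c$ satisfies the hypotheses of Propositions \ref{prop:inverse} and \ref{prop:sharpprop} with $m=2$ and $\mu=2-r$, and then apply Proposition \ref{prop:pseudoLp} to show that $Ru\in H^{s}$. Lemma \ref{lem:smoothing} handles $p^{\sharp}\in\tilde{S}^{2}_{1,1}$, its real-valuedness, the reduced order $3-r$ of its first $x$-derivatives, and the identity $\widehat{p^{\sharp}}=p$. For the subprincipal correction $q_{1}^{\sharp}:=-i\sum\xi_{j}\partial_{x_{i}}a_{ij}^{\sharp}\in\tilde{S}^{1}_{1,1}$, one needs $\partial_{x}^{2}a_{ij}^{\sharp}\in\tilde{S}^{1}_{1,1}$ in order to place $\partial_{x}q_{1}^{\sharp}$ in $\tilde{S}^{2}_{1,1}$, and this is exactly the content of Remark \ref{rem:secondder} with $a_{ij}$ viewed as an order-$0$ symbol. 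Combining, $c\in\tilde{S}^{2}_{1,1}$ with $\partial_{x}c$ of reduced order $3-r=\mu+1$, $\Imag c\in\tilde{S}^{1}_{1,1}$, and $\hat{c}=p$ (the contribution of $q_{1}^{\sharp}$ to the homogeneous limit vanishes since it has order one). For the remainder, $D_{j}u\in H^{\tau+1}$, and applying the first (respectively second, endpoint) statement of Proposition \ref{prop:pseudoLp} to $a_{ij}^{\flat}\in C^{r}_{*}S^{-r}_{1,1}$ yields $a_{ij}^{\flat}(x,D)D_{j}u\in H^{\tau+1+r}$ as soon as $\tau+1+r\in(0,r)$ (respectively $[0,r]$); a short case analysis splitting $s>-1$ from $s\le-1$ in \eqref{eq:tau}, with $\veps>0$ in the open case, confirms this range condition throughout the asserted Sobolev interval. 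Applying $D_{i}$ on the outside produces $Ru\in H^{\tau+r}\subseteq H^{s}$, so that $c(x,D)u=Pu-Ru$ has the same $H^{s}$ wavefront set as $Pu$.

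With these ingredients in hand, Propositions \ref{prop:inverse} and \ref{prop:sharpprop} applied to $c$ (with $\hat{c}=p$) produce the inclusion $\WF^{s+2}u\subseteq\Sigma_{p}\cup\WF^{s}Pu$ and the null bicharacteristic of $p$ through each point of $\WF^{s+1}u\setminus\WF^{s}Pu$; localization to the open set $\Omega$ follows from the same cutoff argument used in the proof of Corollary \ref{cor:domains}. The main obstacle is the second paragraph: without Remark \ref{rem:secondder} the $x$-derivative of $q_{1}^{\sharp}$ would only sit in $\tilde{S}^{3}_{1,1}$, the reduced-order hypothesis of Proposition \ref{prop:sharpprop} would fail, and one would be forced to treat $q_{1}$ as an independent $C^{r-1}_{*}$ perturbation through Remark \ref{rem:lowerorderdomains}, losing precisely the improvement at the negative end of the Sobolev interval that the divergence form is designed to recover.
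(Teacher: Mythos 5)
Your proposal follows essentially the same route as the paper: after the cutoff reduction, decompose $P=q(x,D)-R$ (your $c=q$) with $q(x,\xi)=p^{\sharp}(x,\xi)-i\sum_{i,j}\partial_{x_{i}}a_{ij}^{\sharp}(x,\xi)\xi_{j}$ and $R=\sum_{i,j}\partial_{x_{i}}a_{ij}^{\flat}(x,D)\partial_{x_{j}}$, invoke Lemma \ref{lem:smoothing} and Remark \ref{rem:secondder} to obtain reduced order $3-r$ for $\partial_{x}q$, use Proposition \ref{prop:pseudoLp} (including its endpoint clause) to absorb $Ru$ into $H^{s}$, and then apply Propositions \ref{prop:inverse} and \ref{prop:sharpprop} to $q$. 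Your Sobolev bookkeeping and the identification $\hat{q}=p$ match the paper's argument, so this is the intended proof.
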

\begin{proof}
We first consider the case where $\Omega=\Rn$, $(a_{ij})_{i,j=1}^{n}\subseteq C^{r}_{*}(\Rn)$ and $u\in H^{\tau+2}(\Rn)$. The boundedness properties of multiplication by $a_{ij}$, contained in Proposition \ref{prop:pseudoLp} and Remark \ref{rem:multiplication}, imply that $Pu\in H^{\tau}(\Rn)\cup H^{-1}(\Rn)$ is well defined. Apply the symbol decomposition to each $a_{ij}$, to write
\begin{align*}
P&=p^{\sharp}(x,D)-\sum_{i,j=1}^{n}(\partial_{x_{i}}a_{ij}^{\sharp})(x,D)\partial_{x_{j}}-\sum_{i,j=1}^{n}\partial_{x_{i}}a_{ij}^{\flat}(x,D)\partial_{x_{j}}\\
&=q(x,D)-\sum_{i,j=1}^{n}\partial_{x_{i}}a_{ij}^{\flat}(x,D)\partial_{x_{j}},
\end{align*}
where $q(x,\xi):=p^{\sharp}(x,\xi)-i\sum_{i,j=1}^{n}\partial_{x_{i}}a_{ij}^{\sharp}(x,\xi)\xi_{j}$ for $(x,\xi)\in\Rn\times\Rn$. 

By Lemma \ref{lem:smoothing} and Remark \ref{rem:secondder}, for all $\beta\in\Z_{+}^{n}$ with $|\beta|=1$, the symbol $\partial_{x}^{\beta}q\in \tilde{S}_{1,1}^{2}$ has reduced order $3-r$. 
Moreover, $\Imag q\in \tilde{S}^{1}_{1,1}$ and $\hat{q}=\hat{p}=p\in C^{1}(\Tp\setminus o)$, where we used the embedding $C^{r}_{*}(\Rn)\subseteq C^{1}(\Rn)$. On the other hand, $\sum_{i,j=1}^{n}\partial_{x_{i}}a_{ij}^{\flat}(x,D)\partial_{x_{j}}:H^{\tau+2}(\Rn)\to H^{s}(\Rn)$ is bounded, by Proposition \ref{prop:pseudoLp}. Hence $\WF^{s}q(x,D)u=\WF^{s}Pu$, and one can again apply Propositions \ref{prop:inverse} and \ref{prop:sharpprop} 
to conclude the proof under these global assumptions.

For general $\Omega$, $(a_{ij})_{i,j=1}^{n}\subseteq C^{r}_{*,\loc}(\Omega)$ and $u\in H_{\loc}^{\tau+2}(\Omega)$, one can reason as in the proof of Corollary \ref{cor:domains} and rely on what we have already shown.
\end{proof}

\begin{remark}\label{rem:divreg}
Of course, in  
Corollary \ref{cor:domains} and Theorem \ref{thm:divform}, 
as in Corollaries \ref{cor:propreg1} and \ref{cor:propreg2}, regularity of $u$ propagates along bicharacteristics. 
And, as in Remark \ref{rem:C11}, if the principal coefficients of $p$ are contained in 
$C^{2}_{*,\loc}(\Rn)$, then integral curves of $H_{\hat{p}}$ 
are unique and one recovers the classical notion of propagation of singularities.
\end{remark}

\section{Wave equations}\label{sec:wave}

In this section we use the results from the previous section to recover and improve some results in the literature concerning wave equations with rough coefficients.

\subsection{Operators in standard form}\label{subsec:standwave}

Let $r>1$, 
let $(a_{ij})_{i,j=1}^{n},(b_{j})_{j=1}^{n}\subseteq C^{r}_{*,\loc}(\R^{n+1})$ be real-valued, and let $(c_{j})_{j=0}^{n}\subseteq C^{r-1}_{*,\loc}(\R^{n+1})$. Set
\[
p(t,x,\tau,\xi):=\tau^{2}-\sum_{i,j=1}^{n}a_{ij}(t,x)\xi_{i}\xi_{j}-\sum_{j=1}^{n}b_{j}(t,x) \xi_{j}\tau+c_{0}(t,x)\tau+\sum_{j=1}^{n}c_{j}\xi_{j}
\]
for $(t,x,\tau,\xi)\in T^{*}\R^{n+1}$. Then Corollary \ref{cor:domains} and Remark \ref{rem:lowerorderdomains} yield propagation of $H^{s+1}_{\loc}(\R^{n+1})$ regularity of $u\in H^{\sigma+2}_{\loc}(\R^{n+1})$, for all $-(r-1)<s<r-1$, with $\sigma$ as in \eqref{eq:sigmamain} for $\veps>0$. 
Moreover, if $(c_{j})_{j=0}^{n}\subseteq\HT^{r-1,\infty}_{\loc}(\R^{n+1})$, then one may include $s=-(r-1)$ and $s=r-1$, and if additionally $(a_{ij})_{i,j=1}^{n},(b_{j})_{j=1}^{n}\subseteq \HT^{r,\infty}_{\loc}(\R^{n+1})$, then one may also let $\veps=0$. 

The same statement follows from \cite[Section 3.11]{Taylor00} for $r=2$, but with $\sigma$ replaced by $\sigma+\delta$ for any $\delta>0$. Moreover, for $r=2$ the endpoint case $\delta=0$ is dealt with in \cite{Smith14}, albeit for $s\in\{0,1\}$ and 
under the assumption that the equation is uniformly hyperbolic  in $t$, that the $a_{ij}$ and $b_{j}$ have second derivatives in $L^{1}_{t}L^{\infty}_{x}$, and that the $c_{j}$ have first derivatives in $L^{1}_{t}L^{\infty}_{x}$. Note that the latter regularity assumptions complement those in this article, in the sense that in \cite{Smith14} less regularity is required in the time variable but more in the spatial variable. 

In fact, in \cite{Smith14} a limiting procedure is used to extend the main results to coefficients that are piecewise continuous in time. It is then shown that the a priori regularity assumption $u\in H^{s}_{\loc}(\R^{n+1})$ on the solution  
is sharp for such piecewise continuous coefficients. On the other hand, there is no distinguished time variable in the setting of Corollary \ref{cor:domains}. Moreover, the sharpness example from \cite{Smith14} does not directly apply here, given that Corollary \ref{cor:domains} requires in each variable more regularity than piecewise continuity.

\subsection{Bounded Ricci tensor}\label{subsec:Ricci}

Let $M$ be a compact $n$-dimensional $C^{1}$ manifold without boundary, with a continuous metric tensor $g$, i.e.~an inner product on tangent vectors. 
Then the Laplace--Beltrami operator $\Delta_{g}:H^{1}(M)\to H^{-1}(M)$ is well defined, as is the notion of a harmonic function on $M$. 

Suppose that there exist $1<r<2$ and 
$c,K_{0},K_{1}>0$ with the following property. For every $x\in M$, there exists an open neighborhood $U_{x}\subseteq M$ of $x$ and a $C^{1}$ diffeomorphism $\Phi_{x}:B\to U_{x}$ such that $\Phi_{x}(0)=x$, and such that the pull-back of $g=(g_{ij})_{i,j=1}^{n}$ to $B$ satisfies $K_{0}^{-1}I\leq (g_{ij}(y))_{i,j=1}^{n}\leq K_{0}I$ for all $y\in B$, as well as $g_{ij}(0)=\delta_{ij}$ and
\begin{equation}\label{eq:metricreg}
g_{ij}\in C^{r-1}(B)\cap H^{1}(B),
\end{equation}
with $\|g_{ij}\|_{C^{r-1}(B)\cap H^{1}(B)}\leq K_{1}$, for all $1\leq i,j\leq n$. Here $B\subseteq\Rn$ is the open ball around $0$ of radius $c$, $I$ is the identity matrix and $\delta_{ij}$ is the Kronecker delta. Then, as noted in e.g.~\cite[Section 3.9]{Taylor00}, $M$ is in fact a manifold of class $C^{r}\cap H^{2}$, and the original coordinate maps $\Phi_{x}$ are $C^{r}\cap H^{2}$ diffeomorphisms, for every $x\in M$. Moreover, without loss of generality, one may assume that each $\Phi_{x}^{-1}$ is harmonic.

Now, the connection $1$-form is $\Gamma:=\sum_{j=1}^{n}\Gamma_{j}d x_{j}$ in local coordinates on $B$, where $\Gamma_{j}$ is the matrix with entries 
\[
{\Gamma^{a}}_{bj}:=\frac{1}{2}\sum_{m=1}^{n}g^{am}(\partial_{j}g_{bm}+\partial_{b}g_{jm}-\partial_{m}g_{bj}),
\]
for $1\leq a,b,j\leq n$. Note that $\Gamma$ is well defined and square integrable on $B$, by \eqref{eq:metricreg}. Also, the Riemannian curvature tensor is 
\[
R:=d\Gamma+\Gamma\wedge \Gamma,
\]
and it has components ${R^{a}}_{bjk}\in H^{-1}(B)+ L^{1}(B)$ for $1\leq a,b,j,k\leq n$, again by \eqref{eq:metricreg}. Finally, the Ricci curvature tensor is the $2$-form with components
\[
\Ric_{bk}:=\sum_{j=1}^{n}{R^{j}}_{bjk}\in H^{-1}(B)+L^{1}(B),
\]
for $1\leq b,k\leq n$. Note that if the Riemannian tensor is bounded, then so is the Ricci tensor. Moreover, if $(g_{ij})_{i,j=1}^{n}\subseteq C^{1,1}(B)$, then the Riemannian tensor is bounded.

On the other hand, using that we are working in harmonic coordinates, one can show as in \cite[Section 3.10]{Taylor00} that if 
$(\Ric_{bk})_{b,k=1}^{n}\subseteq L^{\infty}(B)$, 
or more generally if $(\Ric_{bk})_{b,k=1}^{n}\subseteq \HT^{0,\infty}(B)$, then $(g_{ij})_{i,j=1}^{n}\subseteq \HT^{2,\infty}(B)$. And, by \cite[Proposition 3.1.13]{Taylor00}, the latter in turn implies that the harmonic coordinate atlas is of class $W^{3,q}$ for any $q<\infty$. As such, $M$ is then in fact a manifold of class $W^{3,q}$.

By arguing as in \cite[Section 2]{Chen-Smith19}, the Sobolev spaces $H^{s}(M)$ can now be defined in local coordinates for $-3\leq s\leq 3$, and for $|s|\leq 2$ the resulting spaces coincide with those defined using the spectral decomposition of $\Delta_{g}$ on $L^{2}(M)$. In turn, the spaces $H^{s}_{\loc}(\R\times M)$ can then also be defined for $-3\leq s\leq 3$, in local coordinates.

Finally, recall that the wave equation $\partial_{t}^{2}u=\Delta_{g}u$ can be written as
\[
\partial_{t}\sqrt{|g|}\partial_{t}u-\sum_{i,j=1}^{n}\partial_{x_{i}}\big(\sqrt{|g|}g^{ij}\partial_{x_{j}}u\big)=0,
\]
in local coordinates, where $|g|:=\det (g_{ij})_{i,j=1}^{n}$ and $(g^{ij})_{i,j=1}^{n}$ is the inverse of $(g_{ij})_{i,j=1}^{n}$. Hence 
 Theorem \ref{thm:divform} and Remark \ref{rem:divreg}
yield, with $\tau$ as in \eqref{eq:tau}, propagation of $H^{s+1}_{\loc}(\R\times M)$ 
regularity of solutions $u\in H^{\tau+2}_{\loc}(\R\times M)$ to $\partial_{t}^{2}u=\Delta_{g}u$, for all 
$-3\leq s\leq 1$. 
This improves upon the corresponding result in 
\cite[Section 3.11]{Taylor00}, which contains the same statement but with $\tau$ replaced by $\tau+\delta$ for any $\delta>0$, and without the endpoints of the Sobolev interval for $s$.

\section*{Acknowledgments}\label{sec:acknowledge}

 The author would like to thank Andrew Hassell for helpful suggestions.

\bibliographystyle{plain}
\bibliography{Bibliography}

\end{document}